\theoremstyle{remark}
\theoremstyle{plain}
\newtheorem{theorem}{Theorem}[section]
\newtheorem{lemma}[theorem]{Lemma}
\newtheorem{proposition}[theorem]{Proposition}
\newtheorem{defn}{Definition}[section]
\newtheorem{corollary}[theorem]{Corollary}
\newtheorem{conjecture}{Conjecture}[section]
\newtheorem{ex}{Example}[section]
\newtheorem{notation}{Notation}[section]
\newcommand{\PaperTitle}[1]{#1}
\newcommand{\BookTitle}[1]{\emph{#1}}
\newcommand{\JournalName}[1]{\emph{#1}}
\newcommand{\BookPublisher}[1]{#1}
\newcommand{\bailout}{%
  \ifnum\value{page}>1
  \typeout{Error! More than one page}
  \undefinedcommand
  \fi
  \ifdefined\MSCok\relax\else\errmessage{missing MSC}\fi
}
\def\rk{\mathrm{rk}}
\title{Matroidal Cycles and Hypergraph Families}
\author{Ragnar Freij-Hollanti and Patricija Šapokaitė}
\date{\today}
\begin{document}
\maketitle

\begin{abstract}
We propose a novel definition of hypergraphical matroids, defined for arbitrary hypergraphs, simultaneously generalizing previous definitions for regular hypergraphs (Main, 1978), and for the hypergraphs of circuits of a matroid (Freij-Hollanti, Jurrius, Kuznetsova, 2023). As a consequence, we obtain a new notion of cycles in hypergraphs, and hypertrees. We give an equivalence relation on hypergraphs, according to when their so-called matroidal closures agree. Finally, we characterize hypergraphs that are isomorphic to the circuit hypergraphs of the associated matroids.
\end{abstract}

\section{Introduction}
\label{sec1}

Hypergraphs, {\em i.e.} incidence structures between vertices and {\em hyperedges}, each of which can contain an arbitrary number of vertices, are a very natural generalization of graphs. However, the definition is so general that even the most obvious graph theory notions have a long list of possible generalizations, suitable for different use cases. To highlight this phenomenon, there is not a well agreed-upon definition of hypergraph cycles; the most prominent such definitions in the literature are Berge-cyclicity \cite{C_Berge:bib3} and $\alpha$-cyclicity \cite{C_Beeri:bib4}.

These definitions generalize some aspects of what it means to be a cycle in a graph, but fail to capture the aspect where cycles indicate {\em dependency} between edges. Loosely speaking, we want to view edges in hypergraphs as describing (partial) information about some relation between its nodes, and group such edges according to what information (or which edges) can be recovered. More precisely, we want a notion of hypergraph cyclicity such that the edge sets of cycles form the circuits of some matroid. Neither Berge-cycles nor $\alpha$-cycles have this desirable property.

Two previous attempts have been made to define {\em hypergraphical matroids}, by Main in 1978 \cite{R_Main:bib9} and by Lorea in 1975 \cite{M_Lorea:bib10}, both of which agree with traditional graphical matroids for $2$-regular hypergraphs, i.e. graphs. Main's matroid is defined for any {\em $k$-regular} hypergraph \cite{R_Main:bib9} and is representable over any field. Lorea's definition applies to arbitrary hypergraphs, but gives a matroid where hypertrees bases can have very complicated structure, and where circuits are few and far between.

It was pointed out by Santiago Guzmán-Pro~\cite{S_Guzman-Pro:bib8} that the special case of Main's the hypergraphical matroid of the complete $k$-regular hypergraph $K^k_n$ equals the combinatorial derived matroid of the uniform matroid $U_n^{k-1}$, as defined in~\cite{R_Freij-Hollanti:bib5}. Inspired by this rather simple observation, we propose a new definition of a hypergraphical matroid for an arbitrary hypergraph.

Our definition goes in two steps, where we first build, from a hypergraph $H=(V,\mathcal{E})$ a new hypergraph $(\mathcal{E},\mathcal{F}_0)$ on its edges, and then use this ``derived hypergraph'' as a ``skeleton'' for a matroid $\delta H = (\mathcal{E},\mathcal{F})$ on $\mathcal{E}$. In the case where $H$ is itself the circuit hypergraph of a matroid on $V$, our definition agrees with the combinatorial derived matroid defined in~\cite{R_Freij-Hollanti:bib5}. The operation $$(\mathcal{E},\mathcal{F}_0)\leadsto (\mathcal{E},\mathcal{F}),$$ which we think of as a closure operation on hypergraph (although it is not a closure operation in any categorical sense), is defined for arbitrary hypergraphs, and allows us to also see any hypergraph as a skeleton for a matroid on the same ground set.

The rest of the paper is structured as follows: In Section 2 we define our notion of matroidal cycles in hypergraphs. In Section 4, these cycles are the building blocks when we define hypergraphical matroids, using the closure operation that we introduce in Section 3, and which may have independent interest. In Section 5 and 6, we study the structures of cycle-free hypergraphs, categorize hypergraphs according to their closure, and take first steps towards characterizing which matroids can occur as hypergraphical matroids. Finally, Section 7 describes the sequences obtained by repeatedly deriving matroids. It is shown that this sequence diverges, except for matroidal trees and the special case of the uniform matroid $U_4^2$

\section{Matroidal cycles}

We begin this section by establishing a connection between matroids and hypergraphs, and in particular, interpreting the former as a special case of the latter, via the circuit set.

\begin{defn}
A hypergraph is a pair $(V,\mathcal{E})$, where $V$ is a set of {\em vertices} and $\mathcal{E}\subseteq \mathcal{P}(V)$ is a set of {\em hyperedges}. A hypergraph is {\em simple} if there are not two edges $e,f\in\mathcal{E}$ with $e\subseteq f$. A hypergraph is $k$-regular if $|e|=k$ for all $e\in\mathcal{E}$.
\end{defn}
We recover a simple graph as a $2$-regular simple hypergraph. The well known matroid axioms (via circuit sets) from \cite{O_James:bib11} can now be written in terms of hypergraphs:
\begin{defn}\label{MatroidDef}
    A simple hypergraph $M=(V,\mathcal{E})$ is a {\em matroid} if for every $C_1, C_2\in\mathcal{E}$ with $C_1\neq C_2$, and every $v \in C_1 \cap C_2$, there exists $C_3\in \mathcal{E}$ such that $C_3\subseteq C_1\cup C_2\setminus \{v\}$. The set $\mathcal{E}$ is the {\em circuit set} of $M$.
\end{defn}

It should be pointed out, that in the matroid literature, the ground set of a matroid is often denoted by $E$, and the circuit set by $\mathcal{C}$. Here, we choose the notation to be more indicative of the hypergraph interpretation.

There are multiple ways the cycles of hypergraphs have been defined in the literature. The most prominent of those are Berge-cyclicity \cite{C_Berge:bib3} and $\alpha$-cyclicity \cite{C_Beeri:bib4}. We show the relation between the Berge and matroidal cyclicities here:

\begin{defn}
A \textbf{Berge cycle} in a hypergraph $H$ is a sequence $$(v_1,e_1,v_2,e_2,\dots,e_{k-1},v_k,e_k,v_1)$$ such that
\begin{enumerate}
\item $e_1,\dots,e_k$ are distinct edges of $H$;
\item $v_1,\dots,v_k$ are distinct vertices of $H$;
\item $v_i,v_{i+1}\in e_i$ for $i=1,\dots , {k-1}$;
\item $v_k,v_1\in e_k$.
\end{enumerate}
\end{defn}



In order to focus on the dependency aspect of the edge set of a graph, in this paper we take a rather broad usage of the word "cycle". In simple terms, we will say that a set of edges is a matroidal cycle, if the edge set doubly covers its union, and is minimal with this property. More formally, we have the following definition.

\begin{defn}
A collection of edges $\{e_1,...,e_k\}$ in a hypergraph $H$ is \textbf{doubly covering} if $$e_i\subseteq\bigcup_{\stackrel{1\leq j\leq k}{j\neq i}} e_j$$ for all $i=1,\dots, k$. A doubly covering set is a \textbf{matroidal cycle} if it does not have any doubly covering proper subset.
\end{defn}

It is easy to see that the matroidal cyclicity implies Berge cyclicity but not $\alpha$-cyclicity. The reverse implication does not hold, since Berge cyclicity is preserved if new vertices are added to any of the edges, whereas matroidal cyclicity is not.

\begin{proposition}
     Let $S$ be a matroidal cycle in a hypergraph $H$. Then there is a Berge cycle in $H$, containing a subset of the edges in $S$.
\end{proposition}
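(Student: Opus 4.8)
The plan is to translate Berge cyclicity into ordinary graph cyclicity on an auxiliary bipartite graph, and then to use the double-covering and minimality hypotheses to force a cycle there. Write $S=\{e_1,\dots,e_k\}$ and $V'=\bigcup_i e_i$. First I would form the \emph{incidence graph} $B$: a bipartite graph with vertex classes $V'$ and $S$, joining $v\in V'$ to $e_i\in S$ exactly when $v\in e_i$. The payoff of this construction is that a cycle in $B$ reads off as an alternating sequence $v_1,e_{i_1},v_2,e_{i_2},\dots,v_t,e_{i_t},v_1$ in which the $v_j$ are distinct, the $e_{i_j}$ are distinct, and consecutive incidences give $v_j,v_{j+1}\in e_{i_j}$ (indices cyclic). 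This is precisely a Berge cycle whose edge set $\{e_{i_1},\dots,e_{i_t}\}$ is a subset of $S$. So the proposition reduces to the single claim that $B$ is not a forest.

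Next I would record what the hypotheses contribute to $B$. Since $S$ is doubly covering, every $v\in V'$ lies in at least two edges of $S$, so every $V'$-side vertex of $B$ has degree at least $2$. Minimality of the matroidal cycle forbids an empty edge in $S$, since deleting an empty edge would leave all remaining covering conditions intact and so preserve double covering, contradicting minimality; hence no $S$-side vertex of $B$ is isolated either, and $B$ has no isolated vertices at all.

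Now suppose, for contradiction, that $B$ is a forest. Every connected component then has at least two vertices and therefore at least two leaves. A $V'$-vertex cannot be a leaf, since its degree is at least $2$; so every leaf is some $e_i\in S$ with $\deg_B(e_i)=|e_i|=1$, that is, a singleton edge $e_i=\{v\}$. But $v$ has degree at least $2$ in $B$, so it lies in another edge $e_j\neq e_i$, whence $e_i=\{v\}\subseteq e_j$. This is exactly the configuration excluded by simplicity, a contradiction. Therefore $B$ contains a cycle, and the associated Berge cycle proves the statement.

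I expect the third paragraph to be the crux. A naive degree count does \emph{not} close the argument: if $B$ were a forest then $\sum_i|e_i|=|E(B)|\le |V'|+k-1$, while double covering only yields $\sum_i|e_i|\ge 2|V'|$, and together these force merely $k\ge |V'|+1$ rather than any contradiction. The genuine input must be structural, through the leaves of the forest, and it is precisely here that incomparability of the edges is indispensable. Without it the literal statement fails: $\{\{a\},\{b\},\{a,b\}\}$ is a minimal doubly-covering set, hence a matroidal cycle, yet contains no Berge cycle. I would therefore run the argument in a simple hypergraph, and the point I would be most careful to defend is that a singleton leaf edge is necessarily contained in another edge of $S$.
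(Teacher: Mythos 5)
Your proof is correct, but it takes a genuinely different route from the paper's. The paper argues constructively: it starts at an edge--vertex--edge incidence and walks from edge to edge through shared vertices (invoking simplicity to guarantee each edge contains a vertex beyond the one it was entered through), and concludes by finiteness that the walk must eventually revisit an edge and thereby close a Berge cycle. Your argument is structural: you pass to the bipartite incidence (Levi) graph $B$, observe that graph cycles in $B$ are exactly Berge cycles with edges drawn from $S$, and then show $B$ cannot be a forest --- double covering forces degree $\geq 2$ on the vertex side, minimality rules out empty edges, and any leaf of a forest component would have to be a singleton edge contained in another edge of $S$, contradicting simplicity. What your approach buys: the distinctness requirements in the definition of a Berge cycle (distinct vertices, distinct edges) come for free from taking an honest cycle in $B$, whereas the paper's walk would need an extra extraction step to turn a closed walk into a cycle with distinct vertices, a point its write-up glosses over. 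Both proofs lean on simplicity at exactly the same spot (a singleton edge nested inside a larger edge), and your counterexample $\{\{a\},\{b\},\{a,b\}\}$ usefully demonstrates that this hypothesis, which is implicit in the proposition's statement, is genuinely necessary. Your cautionary remark that a naive edge count ($|E(B)|\leq |V'|+k-1$ versus $\sum_i |e_i|\geq 2|V'|$) cannot close the forest case on its own is also accurate; the leaf analysis is the real content.
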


\begin{proof}
If at least one of the intersections of $S$ contains two or more edges, we have a Berge cycle. Suppose that is not the case. Since $S$ is a matroidal cycle, all of the vertices must be in the intersections of the edges. Each intersection is defined by two edges and a vertex ($e_{i-1},x_i,e_i$). Take the first intersection and denote it $e_0,x_1,e_1$. Since $H$ is simple, $e_1$ must contain another vertex besides $x_1$, say $x_2$. $e_1$ and $x_2$, together with another edge, say $e_2$ define a new intersection. If $e_2=e_0$, we are done and have a Berge cycle. If that is not the case, then we can continue the algorithm by adding a new intersection. Since every intersection uses at least two edges, we will always have to either add a new edge or repeat an edge that was added by some previous intersection. If the latter is the case, we close the cycle. Since we have a finite number of intersections, we will run out of new edges and the cycle will eventually close.
    
\end{proof}

 To avoid unnecessarily wordy sentences, here we will refer to matroidal cycles as simply cycles. 

\section{Closures}

 As briefly mentioned earlier, \textbf{we consider matroids as a special kind of simple hypergraphs}, via their circuit sets. While there are, in general, many possible matroids that contain a prescribed hypergraph $H$ as a subgraph, we will define a canonical one next, and call that the {\em matroidal closure} of $H$. This is not a closure (or indeed canonical) in any categorical (or romantic) sense. 
 The definition is an adaption of a closure construction for a specific hypergraph $\mathcal{A}_0$ from \cite{R_Freij-Hollanti:bib5}.

\begin{defn}
Let $\mathcal{E}$ be the edge set of a hypergraph $H=(V,\mathcal{E})$. Denote by
\[
\epsilon(\mathcal{E}):=\mathcal{E}\cup\left\{(A_1 \cup A_2) \setminus \{v\} : A_1, A_2\in \mathcal{E}, A_1\cap A_2\not\in\mathcal{E}, v\in A_1\cap A_2\right\},
\]
\[
\min \mathcal{E}:=\{A\in\mathcal{E}: \not\exists A'\in\mathcal{E}: A'\subsetneq A\},
\]
and
\[
\uparrow \mathcal{E}:=\{A\in\mathcal{E}: \exists A'\in\mathcal{E}: A'\subseteq A\}.
\]
\end{defn}
 
 By construction, $(V,\min\mathcal{E})$ is a simple hypergraph for every hypergraph $(V,E)$. We will next show that recursive application of the composed operation $\min\epsilon$ yields a matroid $\overline{H}$, from any hypergraph $H$.

\begin{theorem}
Let $H=(V,E)$ be a hypergraph. Let $\mathcal{E}_0=\mathcal{E}$, $\mathcal{E}_{i+1}=\min\epsilon \mathcal{E}_i$ for $i\in\mathbb{N}$ and $\mathcal{E}'=\min\big{(}\cup_i E_i\big{)}$. Moreover, let $\mathcal{F}_0=\mathcal{E}$, $\mathcal{F}_{i+1}=\uparrow\epsilon \mathcal{F}_i$ for $i\in\mathbb{N}$ and $\mathcal{F}=\big{(}\cup_i F_i\big{)}$.
Then $\min\mathcal{F}=\mathcal{E}'$, and $\overline{H}=(V,\mathcal{E}')$ is a matroid. We call this the \textbf{matroidal closure} of $H$.
\end{theorem}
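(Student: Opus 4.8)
The plan is to pass to the two limit families $\mathcal{E}^\ast := \bigcup_i \mathcal{E}_i$ and $\mathcal{F} := \bigcup_i \mathcal{F}_i$ (everything living in the finite lattice $\mathcal{P}(V)$, so all unions stabilise), and to establish three things in turn: that $\mathcal{F}$ is up-closed and \emph{$\epsilon$-closed}, that $(V,\min\mathcal{F})$ is a matroid, and finally that $\min\mathcal{F}=\mathcal{E}'$. First I would record that the $\mathcal{F}$-sequence is increasing: since $\epsilon\mathcal{G}\supseteq\mathcal{G}$ always and $\uparrow$ is inflationary, one gets $\mathcal{F}_i\subseteq\mathcal{F}_{i+1}$, so $\mathcal{F}$ is a union of up-sets and hence up-closed. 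The key structural fact is $\epsilon$-closedness: if $A_1,A_2\in\mathcal{F}$ with $A_1\cap A_2\notin\mathcal{F}$ and $v\in A_1\cap A_2$, choose $i$ with $A_1,A_2\in\mathcal{F}_i$; then $A_1\cap A_2\notin\mathcal{F}_i$, so $(A_1\cup A_2)\setminus\{v\}\in\epsilon\mathcal{F}_i\subseteq\mathcal{F}_{i+1}\subseteq\mathcal{F}$.

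Granting this, the matroid property of $(V,\min\mathcal{F})$ is immediate from Definition~\ref{MatroidDef}: $\min\mathcal{F}$ is an antichain, hence simple, and for distinct $C_1,C_2\in\min\mathcal{F}$ and $v\in C_1\cap C_2$, minimality forces $C_1\cap C_2\subsetneq C_1$ and rules out $C_1\cap C_2\in\mathcal{F}$; so $C_1\cap C_2\notin\mathcal{F}$, and $\epsilon$-closedness yields $(C_1\cup C_2)\setminus\{v\}\in\mathcal{F}$, whence some $C_3\in\min\mathcal{F}$ lies in $(C_1\cup C_2)\setminus\{v\}$. This argument uses only that $\mathcal{F}$ is up-closed and $\epsilon$-closed and is independent of the identification $\min\mathcal{F}=\mathcal{E}'$; in particular I may then use matroid theory (rank, fundamental circuits) for $M:=(V,\min\mathcal{F})$, with $\mathcal{F}$ being exactly the dependent sets of $M$.

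It remains to prove $\min\mathcal{F}=\mathcal{E}'=\min\mathcal{E}^\ast$; using $\min\uparrow\mathcal{A}=\min\mathcal{A}$, this reduces to the two inclusions $\mathcal{E}^\ast\subseteq\mathcal{F}$ and $\mathcal{F}\subseteq\uparrow\mathcal{E}^\ast$, which jointly give $\mathcal{F}=\uparrow\mathcal{E}^\ast$. The engine for both is a \emph{persistence lemma}: if $X\in\min\mathcal{E}^\ast$ and $X\in\mathcal{E}_k$, then $X\in\mathcal{E}_m$ for all $m\ge k$, since any set that deleted $X$ in passing to $\mathcal{E}_{k+1}=\min\epsilon\mathcal{E}_k$ would leave a proper subset of $X$ in $\mathcal{E}_{k+1}\subseteq\mathcal{E}^\ast$, contradicting minimality. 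Hence any finite collection of minimal members of $\mathcal{E}^\ast$ eventually coexist in a common stage $\mathcal{E}_k$, which is precisely what permits applying $\epsilon$ to them. Using this I would first verify that $\uparrow\mathcal{E}^\ast$ is $\epsilon$-closed: given $A_1,A_2\in\uparrow\mathcal{E}^\ast$ with $A_1\cap A_2\notin\uparrow\mathcal{E}^\ast$ and $v\in A_1\cap A_2$, pass to minimal $\hat A_1,\hat A_2\in\min\mathcal{E}^\ast$ below them; if either avoids $v$ it already sits inside $(A_1\cup A_2)\setminus\{v\}$, and otherwise $\hat A_1\ne\hat A_2$ with $\hat A_1\cap\hat A_2\notin\mathcal{E}^\ast$, so eliminating $v$ in a common stage deposits a member of $\mathcal{E}^\ast$ inside $(A_1\cup A_2)\setminus\{v\}$.

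The two inclusions are then proved by induction along the sequences, and this is where the real difficulty lies. The operation $\epsilon$ is \emph{not monotone}: because of the side condition $A_1\cap A_2\notin\mathcal{E}$, enlarging a family can forbid an elimination that was previously allowed. The $\mathcal{E}$-process tests membership in the small set $\mathcal{E}_i$, while the $\mathcal{F}$-process (and the target $\uparrow\mathcal{E}^\ast$) tests membership in a larger set, so each process may perform eliminations the other skips. The crux is to show that an extra set $(A_1\cup A_2)\setminus\{v\}$ generated when $A_1\cap A_2\notin\mathcal{E}_i$ but $A_1\cap A_2\in\mathcal{F}$ is nevertheless dependent in $M$ (and symmetrically for the other inclusion). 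I expect to resolve this inside $M$: when every circuit contained in $A_1$ and in $A_2$ passes through $v$, writing the fundamental circuits of $v$ and applying circuit elimination in $M$ produces a circuit inside $(A_1\cup A_2)\setminus\{v\}$ unless these two fundamental circuits coincide; the coincident case must then be excluded using that each $\mathcal{E}_i$ is an antichain together with the persistence lemma. Pinning down this last case — reconciling the two elimination conditions in the presence of ``premature'' large circuits that are only later refined to smaller ones — is the main obstacle, and the step I would spend the most care on.
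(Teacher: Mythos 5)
Your first half is sound and coincides with the paper's first step: passing to the limit family $\mathcal{F}$, showing it is up-closed and $\epsilon$-closed, and deducing from this that $(V,\min\mathcal{F})$ satisfies Definition~\ref{MatroidDef} is exactly what the paper does (phrased there as stabilization, $\uparrow\epsilon\mathcal{F}=\mathcal{F}$). Your persistence lemma and the $\epsilon$-closedness of $\uparrow\mathcal{E}^\ast$ are also correct. But the actual content of the theorem --- the identification $\min\mathcal{F}=\mathcal{E}'$ --- is precisely what you leave unproved: you reduce it to the inclusions $\mathcal{E}^\ast\subseteq\mathcal{F}$ and $\mathcal{F}\subseteq\uparrow\mathcal{E}^\ast$, correctly identify the non-monotonicity of $\epsilon$ (the side condition $A_1\cap A_2\notin\cdot$) as the obstruction, and then offer only a hoped-for resolution ``inside $M$'' whose critical case you yourself flag as unresolved. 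That case is a genuine hole, and it cannot be closed by antichain-plus-persistence considerations at the limit level: nothing you have established rules out that two distinct $A_1,A_2\in\mathcal{E}_i$ both contain a common ``future'' circuit $C$ of $M$ with $C\subseteq A_1\cap A_2$, $C\ni v$, and no circuit appearing at stage $i$ --- and in exactly that configuration your circuit-elimination argument in $M$ produces nothing inside $(A_1\cup A_2)\setminus\{v\}$.

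The paper closes this gap with a device you should adopt: do not compare stage $i$ of one process against the limit of the other; instead carry the stage-wise identity $\min\mathcal{F}_i=\mathcal{E}_i$ as the induction hypothesis. With both processes held at the same stage, the two side conditions reconcile by elementary order-theoretic facts, and no matroid theory is needed. Concretely: (a) given $A,B\in\mathcal{F}_i$ with $A\cap B\notin\mathcal{F}_i$ and $v\in A\cap B$, pick $A'\subseteq A$, $B'\subseteq B$ in $\min\mathcal{F}_i=\mathcal{E}_i$; if $v\notin A'$ or $v\notin B'$ that minimal set already lies in $(A\cup B)\setminus\{v\}$, and otherwise up-closedness of $\mathcal{F}_i$ (valid for $i\geq1$; the step $i=0$ is trivial since $\mathcal{F}_0=\mathcal{E}_0$) forces $A'\neq B'$ and $A'\cap B'\notin\mathcal{F}_i\supseteq\mathcal{E}_i$, so $(A'\cup B')\setminus\{v\}\in\epsilon\mathcal{E}_i$ sits inside $(A\cup B)\setminus\{v\}$; (b) conversely, given $A',B'\in\mathcal{E}_i$ with $A'\cap B'\notin\mathcal{E}_i$, one has $A'\cap B'\subsetneq A'$, so minimality of $A'$ in $\mathcal{F}_i$ gives $A'\cap B'\notin\mathcal{F}_i$, and the same elimination is legal in $\epsilon\mathcal{F}_i$. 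Together (a) and (b) give $\min\epsilon\mathcal{F}_i=\min\epsilon\mathcal{E}_i$, i.e.\ $\min\mathcal{F}_{i+1}=\mathcal{E}_{i+1}$, and the theorem follows since the $\mathcal{F}_i$ stabilize. Note that (b) is exactly what annihilates your feared case: every elimination the $\mathcal{E}$-process performs at stage $i$ is simultaneously performed by the $\mathcal{F}$-process at stage $i$, so its output lands in $\mathcal{F}_{i+1}\subseteq\mathcal{F}$ and is automatically dependent in $M$; the ``premature large circuit'' scenario is thereby impossible, but its impossibility is a consequence of the lockstep induction rather than an input to it.
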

\begin{proof}
    We first observe that $\{\mathcal{F}_i\}_i$ is an increasing sequence of subsets of a finite set, so it terminates at $\mathcal{F}$, meaning that $\uparrow\epsilon\mathcal{F}=\mathcal{F}$. Therefore $\min\epsilon\min\mathcal{F}=\min\mathcal{F}$, so $\min\mathcal{F}$ satisfies the matroid axioms in Definition~\ref{MatroidDef}.

    By induction on $i$, we see that $\min \mathcal{F}_i=\mathcal{E}_i$ for every $i$. Indeed, every set of the form $A\cup B\setminus\{v\}$ for some $v\in A\cap B$, $A,B\in\mathcal{F}_i$, contains the set $A'\cup B'\setminus\{v\}$, where $A'\subseteq A$ and $B'\subseteq B$, and $A', B'\in\min\mathcal{F}$. Moreover, if $A\cap B\not\in \mathcal{F}_i$, then $A'\cap B'\not\in \min\mathcal{F}_i$. It follows that $$\mathcal{F}_{i+1}=\min\epsilon\mathcal{F}_i=\min\epsilon\min\mathcal{F}_i=\min \epsilon\mathcal{E}_i=\mathcal{E}_{i+1}$$
\end{proof}

The idea of the concept above is the same as the one of the combinatorial derived matroid. We will call the hypergraphs that are such closures \textit{matroidal hypergrapghs}. Now that we have a matroid, we can talk about what translation can be given for a notion of a rank.

\begin{defn}
The \textbf{rank of a matroidal hypergraph} is the largest set of vertices such that there are no edges containing only those vertices. We will call the vertices in said set a \textbf{basis}.
\end{defn}

\begin{defn}
Define the \textbf{rank of a hypergraph} as the rank of its matroidal closure.
\end{defn}

To make the proof sound less tangled, we will call the vertices from the basis, once we fix it, \textbf{independent} and all other vertices \textbf{dependent}.

\section{Hypergraphical matroids}

The idea of constructing matroids from hypergraphs is not new. For example, regular hypergraphs were analysed in \cite{R_Main:bib9} (Main, Roger Anthony (1978)). However, we did not find any literature generalising the definitions for non-regular hypergraphical matroids. We present our interpretation of the generalisation here, together with some resultant facts. We exploit the definition of the matroidal closure of a hypergraph, from the last section. 

To every graph $G=(V,E)$, it is a classical construction to associate a matroid $(E,\mathcal{C})$, whose ground set is $E$, where $\mathcal{C}$ consists of the (edge sets of) cycles in the graph. In \cite{M_Lorea:bib10}, this was generalized to arbitrary hypergraphs, yielding a very weak matroid.

\begin{defn}
A matroid $M(H)$, Lorea-associated with a hypergraph $H(V,\mathcal{E})$ is a pair $(\mathcal{E},I)$, where $\mathcal{I}\in I$ iff $\mathcal{I}$ is a collection of edges in $H(\mathcal{I}\subset\mathcal{E})$ 
\begin{itemize}
\item $\exists$ a simple and acyclic graph $G(V,U)$
\item $\exists$ a bijection $b$ from $\mathcal{U}$ to $\mathcal{I}$ such that $b(U)\supset U$ for all $U\in\mathcal{U}$
\end{itemize}
\end{defn}

\begin{defn}
The hypergraphs represented by the elements of $I$ are Lorea-cycle-free hypergraphs.
\end{defn}

This definition is then followed by a proposition, that can be easily compared to our results:

\begin{proposition}
A hypergraph has a Lorea's cycle iff
\[
|E|>|V|-1
\]
\end{proposition}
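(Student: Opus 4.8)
The plan is to translate Lorea-cycle-freeness into a concrete combinatorial condition and then treat each implication separately. A sub-collection $\mathcal{I}\subseteq\mathcal{E}$ is Lorea-cycle-free exactly when one can choose, for every hyperedge $e\in\mathcal{I}$, a two-element subset $U_e\subseteq e$, all the $U_e$ distinct, so that the graph $(V,\{U_e\})$ is acyclic: this is precisely the data of the simple acyclic graph $G=(V,\mathcal{U})$ together with the bijection $b\colon\mathcal{U}\to\mathcal{I}$ satisfying $b(U)\supseteq U$ from the definition. Hence ``$H$ has a Lorea cycle'' means that the full edge set $\mathcal{E}$ admits no such choice of distinct chords forming a forest. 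Before starting I would dispose of the degenerate case $|e|\le 1$, since such an edge contains no chord and is a Lorea loop (a one-element cycle); so I assume $|e|\ge 2$ throughout.

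The implication $|\mathcal{E}|>|V|-1 \Rightarrow H$ has a cycle is the robust half, and I would prove it first by pure counting. Any simple acyclic graph on the vertex set $V$ has at most $|V|-1$ edges, so $|\mathcal{U}|\le |V|-1$; since $b$ is a bijection onto $\mathcal{E}$, a forest-realizing choice would force $|\mathcal{U}|=|\mathcal{E}|>|V|-1$, which is impossible. Therefore $\mathcal{E}\notin I$ and $H$ has a Lorea cycle.

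For the converse I would set the problem up as an independent-transversal problem and invoke Rado's theorem (the matroid analogue of Hall's theorem) for the cycle matroid of the complete graph $K_V$: the distinct chords can be chosen to form a forest iff every sub-collection $X\subseteq\mathcal{E}$ satisfies $|X|\le |V_X|-\kappa_X$, where $V_X=\bigcup_{e\in X}e$ and $\kappa_X$ is the number of connected components of the sub-hypergraph on $X$. Equivalently, one runs the matroid-union augmenting-path procedure, adding chords one edge at a time.

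The main obstacle, and the point I would flag carefully, is exactly this converse: the global bound $|\mathcal{E}|\le|V|-1$ by itself does not force the Rado condition, because a locally dense sub-collection can already be a cycle while the whole hypergraph stays under the count. For instance, on $V=\{1,2,3,4,5\}$ the connected, spanning hypergraph $\mathcal{E}=\{\{1,2\},\{1,3\},\{2,3\},\{3,4,5\}\}$ has $|\mathcal{E}|=4=|V|-1$ yet contains the triangle $\{\{1,2\},\{1,3\},\{2,3\}\}$, which is already a Lorea cycle. Thus the clean biconditional is sharp only when read per sub-collection, the honest characterization being that $H$ is Lorea-cycle-free iff $|X|\le|V_X|-\kappa_X$ for all $X\subseteq\mathcal{E}$, with $|\mathcal{E}|>|V|-1$ surviving only as the necessary count coming from the basis bound. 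I would therefore present the counting half as stated, establish the Rado criterion as the correct general description of Lorea-cycle-freeness, and indicate precisely which connectivity hypothesis on $H$ is needed to recover the verbatim inequality.
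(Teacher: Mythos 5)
The paper never actually proves this proposition: it is imported from Lorea's 1975 paper as background (``This definition is then followed by a proposition\dots''), and the only part the paper subsequently relies on is the forward implication --- its own analogue in Section 5, that $|E|>|V|-\rk(H)$ forces a matroidal cycle, is likewise stated as a one-way implication. So your proposal can only be judged on its own merits, and on those it is essentially right. Your unpacking of Lorea-independence as the existence of distinct representative chords $U_e\subseteq e$ forming a forest is the correct reading of the definition (with $b(U)\supset U$ read as containment rather than strict containment, which is forced if the construction is to generalize graphic matroids), and your counting argument for the direction ``$|\mathcal{E}|>|V|-1$ implies a Lorea cycle'' is correct and complete: a forest on $V$ has at most $|V|-1$ edges, so no bijection from a forest onto $\mathcal{E}$ can exist.

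Your diagnosis of the converse is the substantive point, and it is also correct: the biconditional is false as literally stated. In your example $\mathcal{E}=\{\{1,2\},\{1,3\},\{2,3\},\{3,4,5\}\}$ on five vertices, the three $2$-element edges can only be represented by themselves, and these form a triangle; hence that sub-collection is Lorea-dependent, so $H$ has a Lorea cycle although $|\mathcal{E}|=|V|-1$. (Singleton edges, which you set aside at the start, give even cheaper counterexamples.) The repair you propose is the right one: by Rado's theorem applied to the cycle matroid of $K_V$, a collection is Lorea-independent iff $|Y|\le |V_Y|-\kappa_Y$ for every sub-collection $Y$, where $\kappa_Y$ counts connected components; equivalently, $H$ has a Lorea cycle iff some \emph{connected} sub-collection $X$ satisfies $|X|>|V_X|-1$. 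In other words, the inequality in the proposition must be quantified over partial hypergraphs rather than applied only globally to $H$; that local statement is presumably what Lorea's original result asserts, and the transcription in the paper has lost the quantifier. Since the paper only ever uses the provable direction (``Lorea's cycles are also cycles by our definition''), nothing downstream of this proposition is affected, but as stated only the ``if'' half admits a proof, and your write-up is correct to say so rather than to pretend otherwise.
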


As we will see, Lorea's cycles are also cycles by our definition.

In \cite{R_Main:bib9}, a stronger generalization of graphical matroids was constructed for $k$-regular hypergraphs, by defining the independent sets of edges:

\begin{defn}
An edge set $E'\subset E$ of a $k$-hypergraph $H=(V,E)$ is called independent if either it is empty or $|V(G)|\geq |G|+k-1$ for each nonempty $G\subset E$. Every matroid that is isomorphic to $(E, I)$ for such a $k$-hypergraph is called the \textbf{$k$-hypergraphical matroid}.
\end{defn}

In order to generalize this definition to arbitrary hypergraphs, we need to settle on what edge sets will be considered cyclic.

In \cite{R_Freij-Hollanti:bib5} (Freij-Hollanti, Jurrius, Kuznetsova, 2023), the authors introduced the concept of a combinatorial derived matroid, liberating the secondary dependencies from the fixation of the representation field. Here we continue the theory, basing our research on the said interpretation of the derivation. 

In short, the construction of the derived matroid consists of taking the circuits of a given matroid as elements and using the matroidal rules to form a ``minimal'' matroid, closing it upwards. Here we briefly present the said operations and the necessary definitions.

\begin{defn}
Let $M=(E,\mathcal{D})$ be a matroid on the finite ground set $E$. We call the function $r:2^E\to \mathbb{N}$ \textbf{a rank function} if it is defined by 
\[
r(S)=\max\{|T|: T\subseteq S, T\not\in\mathcal{D}\}.
\]
The \textbf{nullity function} $\eta:2^E\to \mathbb{N}$ is then defined by $\eta(S)=|S|-r(S)$.
\end{defn}

\begin{defn}
Take a matroid $M$ with a collection of circuits $\mathcal{C}=\mathcal{C}(M)$. Denote 
\[
\mathcal{A}_0:=\{A \subseteq \mathcal{C}: |A|> \eta(\cup_{C\in A} C)\}.  
\]
Inductively, define $\mathcal{A}_{i+1}={\uparrow}\epsilon(\mathcal{A}_i)$ for $i\geq 1$, and 
\[
\mathcal{A}=\bigcup_{i\geq 0} \mathcal{A}_i.
\]
\end{defn}

\begin{defn}
    Consider a matroid $M$ with a collection of circuits $\mathcal{C}$. Then the \textbf{combinatorial derived matroid} $\delta M$ is a matroid which has a ground set $\mathcal{C}(M)$ with dependent sets being elements of $\mathcal{A}$.
\end{defn}

The next proposition states that the derived matroid construction decomposes into connected components. In particular, it will allow us to restrict attention to connected hypergraphs in several later instances.
\begin{proposition}\label{connected}
    Let $H=H_1\sqcup H_2$ be a disconnected hypergraph. Then $\overline{H}=\overline{H_1}\sqcup \overline{H_2}$ and $\delta H =\delta H_1 \sqcup \delta H_2$.
\end{proposition}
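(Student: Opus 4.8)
The plan is to show that each elementary operation used to build $\overline{H}$ and $\delta H$ is compatible with the disjoint-union decomposition, and then feed this through the inductive definitions. Throughout, write $H_i=(V_i,\mathcal{E}_i)$ with $V=V_1\sqcup V_2$ and $\mathcal{E}=\mathcal{E}_1\sqcup\mathcal{E}_2$, where every $e\in\mathcal{E}_i$ satisfies $e\subseteq V_i$, so that $V_1\cap V_2=\emptyset$. The key structural lemma is that the three operators $\epsilon$, $\min$, and $\uparrow$ all respect $\sqcup$. For $\epsilon$, a new edge $(A_1\cup A_2)\setminus\{v\}$ requires $v\in A_1\cap A_2$; if $A_1\in\mathcal{E}_1$ and $A_2\in\mathcal{E}_2$ then $A_1\cap A_2\subseteq V_1\cap V_2=\emptyset$, so no cross-component pair contributes, while every edge produced from a same-component pair again lies inside that component. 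Hence $\epsilon(\mathcal{E}_1\sqcup\mathcal{E}_2)=\epsilon(\mathcal{E}_1)\sqcup\epsilon(\mathcal{E}_2)$. For $\min$ and $\uparrow$, observe that if $A\subseteq V_1$ then any $A'$ with $A'\subsetneq A$ (or $A'\subseteq A$) also lies in $V_1$, so containment comparisons never cross components; thus both operators act componentwise on a disjoint union whose edges are each confined to one side.

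Next I would run the induction from the theorem defining the matroidal closure. Since the three operators respect $\sqcup$ and preserve the property that every edge stays within one component, an easy induction on $i$ gives $\mathcal{E}_i=\mathcal{E}_i^{(1)}\sqcup\mathcal{E}_i^{(2)}$, and likewise for the $\mathcal{F}_i$, where the superscripts denote the iterates computed inside $H_1$ and $H_2$. Taking unions over $i$ and applying $\min$ once more — which again respects $\sqcup$ — yields $\mathcal{E}'={\mathcal{E}'}^{(1)}\sqcup{\mathcal{E}'}^{(2)}$, that is $\overline{H}=\overline{H_1}\sqcup\overline{H_2}$ as matroids, the disjoint union of circuit sets being exactly the matroid direct sum.

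For $\delta H$ I would first show that the skeleton $(\mathcal{E},\mathcal{F}_0)$ decomposes. Suppose a doubly covering set $S$ meets both components, say $S=S_1\sqcup S_2$ with $S_j\subseteq\mathcal{E}_j$ both nonempty. For each $e\in S_1$ we have $e\subseteq V_1$, while $\bigcup_{f\in S_2}f\subseteq V_2$ is disjoint from $e$; since $e\subseteq\bigcup_{f\in S,\,f\neq e}f$, it follows that $e\subseteq\bigcup_{f\in S_1,\,f\neq e}f$, so $S_1$ is already doubly covering. As $S_2\neq\emptyset$, this makes $S$ non-minimal, so $S$ is not a matroidal cycle. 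Hence every matroidal cycle of $H$ lies entirely in $\mathcal{E}_1$ or in $\mathcal{E}_2$ (and the cycles within $V_j$ computed in $H$ coincide with those of $H_j$), so the derived hypergraph is itself the disjoint union $(\mathcal{E}_1,\mathcal{F}_0(H_1))\sqcup(\mathcal{E}_2,\mathcal{F}_0(H_2))$. Applying the first part to this derived hypergraph gives $\delta H=\overline{(\mathcal{E},\mathcal{F}_0)}=\delta H_1\sqcup\delta H_2$.

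I expect the main obstacle to lie in the $\delta H$ half, specifically in the bookkeeping at the level of the raw family $\mathcal{A}_0$ rather than the skeleton of matroidal cycles. Because nullity is additive over a direct sum, the inequality $|A|>\eta(\bigcup_{C\in A}C)$ does not split cleanly at stage $0$: a set $A$ straddling both components can sit exactly at the threshold $|A|=\eta(\bigcup_{C\in A}C)$ even when its restriction to one side is already dependent, so the naive equality $\mathcal{A}_0(M_1\oplus M_2)=\mathcal{A}_0(M_1)\sqcup\mathcal{A}_0(M_2)$ fails. The resolution is that any such straddling set contains a genuinely dependent single-component subset and is therefore recovered by the upward closure $\uparrow$, so the decomposition holds for the fully closed family even though it fails stage by stage. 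Making this reabsorption precise, rather than the routine within-component computations, is where the care is needed; once it is combined with the skeleton argument above and the first part, the statement follows.
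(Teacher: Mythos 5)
Your proof is correct and follows the same basic route as the paper's own proof, namely componentwise action of all the operations involved; but the paper's proof consists of just two sentences, asserting that ``in both operations we only consider the edges that intersect with other edges,'' so that the operations can be applied to each component separately. Your write-up supplies exactly the details this assertion omits: the verification that $\epsilon$, $\min$ and $\uparrow$ commute with $\sqcup$ (cross-component pairs have empty intersection, hence contribute nothing to $\epsilon$, and containment comparisons never cross components), and the observation that a doubly covering set meeting both components restricts to a doubly covering set on one side, so that no matroidal cycle straddles components. Most importantly, your final paragraph identifies a point where the paper's one-line justification is not literally accurate: for the derived matroid as defined in Section 4 via $\mathcal{A}_0=\{A\subseteq\mathcal{C}: |A|>\eta(\cup_{C\in A}C)\}$, straddling sets do occur in $\mathcal{A}_0(M_1\oplus M_2)$ even though their two halves never intersect, so the decomposition genuinely fails stage by stage; your resolution --- additivity of nullity forces one restriction to exceed its own threshold, so every straddling set is reabsorbed by the upward closure $\uparrow$ --- is the correct key observation, and it appears nowhere in the paper. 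The one soft spot is that you leave this reabsorption as a sketch: to finish it one should, for instance, check that the dependent sets of $\delta M_1\oplus\delta M_2$ form an upward-closed family that contains $\mathcal{A}_0(M_1\oplus M_2)$ and is closed under $\epsilon$ (using that $\delta M_1\oplus\delta M_2$ is a matroid), giving one inclusion, and embed the componentwise iterations into the global one for the other; the negative condition $A_1\cap A_2\notin\mathcal{A}_i$ in $\epsilon$ makes this a genuine, if routine, piece of bookkeeping. Since the paper's proof does not even acknowledge this issue, your version is strictly more complete than the original.
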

\begin{proof}
    The proof of the proposition follows from the definitions of the closure and the derived matroid. In both operations we only consider the edges that intersect with other edges, so when we deal with disconnected hypergraphs, we can apply the operations to each component separately. 
\end{proof}

When talking about hypergraphs that represent elements of matroids, the notation $\delta H$ will mean a hypergraph that represents a combinatorial derived matroid of a matroid that was represented by $H$. In the attempt to make things more clear, we suggest to think about the edges on $H$ as the vertex set of $\delta H$ and the cycles of $H$ as the edges of $\delta H$.

Since a lot of the reasoning here deals with an edge union representing the covered vertices, we simplify our notation:

\begin{notation}
Denote $\cup A:= \cup_{a\in A}a$.
\end{notation}

To tie the derived matroid theory to the visualisation of hypergraph cycles, we prove the fact below.

\begin{proposition}\label{prop-a}
Every element of $\mathcal{A}_0$ represents a set of edges that contains a cycle of a hypergraph.
\end{proposition}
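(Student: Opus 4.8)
The plan is to read the statement in the hypergraph language already set up: the matroid $M=(E,\mathcal{C})$ \emph{is} the hypergraph whose vertices are the ground-set elements and whose edges are the circuits, so an element $A\in\mathcal{A}_0$ is just a set of edges $\{C_1,\dots,C_k\}$ of this hypergraph satisfying $|A|>\eta(\cup A)$. To say $A$ contains a cycle is to say some subset of $A$ is a minimal doubly covering family of circuits, i.e.\ after discarding circuits we reach a family in which no circuit has a \emph{private} vertex (one lying in it but in no other circuit of the family). Thus the proposition reduces to the claim: if $|A|>\eta(\cup A)$, then $A$ contains a doubly covering subfamily.

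I would prove this by an extremal argument. Among all subfamilies $B\subseteq A$ that still satisfy $|B|>\eta(\cup B)$ (the family $A$ itself qualifies, and each such $B$ is nonempty since the inequality forces $|B|\ge 1$), choose one minimal under inclusion. The claim is that this minimal $B$ is already doubly covering; granting this, a minimal-by-inclusion doubly covering subset of $B$ exists and, by definition, is a matroidal cycle contained in $A$, which finishes the proof.

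To see the minimal $B$ is doubly covering I argue by contradiction: if some $C_i\in B$ has a nonempty set $P$ of private vertices, put $B'=B\setminus\{C_i\}$, so that $\cup B'=\cup B\setminus P$ with $P$ disjoint from $\cup B'$. I then show $B'$ again satisfies the defining inequality, contradicting minimality. The heart of the matter is the rank estimate
\[
r(\cup B)-r(\cup B')\le |P|-1 .
\]
This is where the circuit structure is essential: submodularity applied to $C_i$ and $\cup B'$ gives $r(\cup B)+r(C_i\cap\cup B')\le r(C_i)+r(\cup B')$, and because $C_i$ is a circuit we have $r(C_i)=|C_i|-1$, while $C_i\cap\cup B'=C_i\setminus P$ is a proper, hence independent, subset of the circuit, so $r(C_i\setminus P)=|C_i|-|P|$. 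Substituting yields the displayed bound. Feeding it into $\eta(\cup B')=(|\cup B|-|P|)-r(\cup B')$ together with $|B|\ge |\cup B|-r(\cup B)+1$ gives $|B'|-\eta(\cup B')\ge |P|-(r(\cup B)-r(\cup B'))\ge 1$, the desired contradiction.

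The only genuinely delicate point is this submodularity computation, in particular that it survives the degenerate case $P=C_i$, where $C_i$ is disjoint from every other circuit of $B$: there $C_i\setminus P=\emptyset$, $r(\emptyset)=0=|C_i|-|P|$, and the same substitution goes through. Everything else—existence of the minimal $B$, the passage from a doubly covering family to a minimal one, and the bookkeeping with $\eta$—is routine, so I expect the circuit-based rank bound to be the main obstacle to pin down exactly.
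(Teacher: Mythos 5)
Your proof is correct and takes essentially the same route as the paper's: both arguments pick an inclusion-minimal subfamily $B$ satisfying $|B|>\eta(\cup B)$, observe that a circuit with a private vertex could then be deleted while preserving the inequality (via submodularity of rank, which the paper phrases equivalently as supermodularity of nullity with $\eta(C)=1$ and $\eta(C\cap \cup B')=0$), and conclude by contradiction with minimality that $B$ is doubly covering and hence contains a cycle. The only cosmetic differences are that you compute with ranks and the full private-vertex set $P$, whereas the paper removes a single private vertex and computes with nullities.
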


\begin{proof}
Assume $A\in\mathcal{A}_0$ and let $T\subset A$ be inclusion minimal in $\mathcal{A}_0$. Assume $A$ has no cycles, and therefore $T$ is not a cycle.

Now let $C\in T$, $x\in C$ and $x\notin\cup T\setminus C$. Denote $S:=T\setminus \{C\}$.
Then 
\[
\eta(\cup T)+\eta(\cup S\cap C)\geq\eta(\cup S)+\eta(C)
\]
\[
\eta(\cup T)+0\geq\eta(\cup S)+1
\]
\[
|T|>\eta(\cup S)+1
\]
\[
|S|=|T|-1>\eta(\cup S)+1-1=\eta(\cup S),
\]
which contradicts the fact that $|T|$ was minimal. 

\end{proof}

Moreover, we can show that any element of $\mathcal{A}$ must be or contain a cycle. Now this gives us the power to jump between the visual concepts of cycles and the theory of derived matroids. As we will later see, this allows us to consider both sides in terms of another.

\section{Matroidal trees}

\begin{proposition}
The hypergraph $H$ contains a cycle if
\[
|E|>|V|-\rk(H).
\]
\end{proposition}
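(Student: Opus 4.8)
The plan is to reduce the statement to the derived-matroid machinery of the previous sections and to finish with Proposition~\ref{prop-a}. First I would pass from $H$ to its matroidal closure $\overline{H}=(V,\mathcal{E}')$, which is a genuine matroid on $V$, and write $r$ and $\eta$ for its rank and nullity functions. The definitions of the rank of a hypergraph unwind to the identity $|V|-\rk(H)=|V|-r(V)=\eta(V)$, so the hypothesis $|E|>|V|-\rk(H)$ is exactly $|\mathcal{E}|>\eta(V)$. I would then observe that any vertex lying in no edge is a coloop of $\overline{H}$: every set produced by $\epsilon$ is contained in a union of earlier edges, so inductively every circuit of $\overline{H}$ sits inside $\cup\mathcal{E}$. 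Consequently $r(V)=r(\cup\mathcal{E})+|V\setminus\cup\mathcal{E}|$ and hence $\eta(\cup\mathcal{E})=\eta(V)$ (monotonicity of $\eta$ already gives $\eta(\cup\mathcal{E})\le\eta(V)$, which is all that is needed).

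Combining these two steps yields $|\mathcal{E}|>\eta(\cup\mathcal{E})$, which is precisely the inequality defining membership in $\mathcal{A}_0$ for the matroid $\overline{H}$. Thus the whole edge collection is an element of $\mathcal{A}_0$, and Proposition~\ref{prop-a} then produces a cycle inside it, giving the desired conclusion. Equivalently, one could argue the contrapositive: if $H$ has no cycle then no subfamily of $\mathcal{E}$ is doubly covering, so one can repeatedly peel off an edge together with a vertex private to it, and an induction on $|E|$ should deliver $|E|\le|V|-\rk(H)$.

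The step I expect to be the main obstacle is keeping the count honest under the passage from $H$ to $\overline{H}$. The edges of $H$ need not themselves be circuits of $\overline{H}$, since the $\min$ operation may replace an edge by a strictly smaller circuit it contains; to land in $\mathcal{A}_0$ one must send each $e\in\mathcal{E}$ to a circuit $c(e)\subseteq e$ and then control the image family $\{c(e)\}$. If the assignment $e\mapsto c(e)$ fails to be injective one loses the bound $|\mathcal{E}|>\eta$, so the crux is to show that the hypothesis still forces $|\{c(e)\}|>\eta\big(\cup_e c(e)\big)$, for instance by using the private vertices of a would-be acyclic ordering to witness distinctness. In the contrapositive formulation this same difficulty reappears as a rank-bookkeeping step: one must verify that deleting an edge together with a private vertex changes $\rk$ by exactly the amount the induction requires, a point that is genuinely delicate precisely when the private vertex has become a coloop of the closure.
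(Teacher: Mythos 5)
Your overall strategy is the same as the paper's: rewrite the hypothesis as $|\mathcal{E}|>\eta(V)$ for the closure matroid $\overline{H}$ and then invoke Proposition~\ref{prop-a}. But the obstacle you flag as ``the main obstacle'' is not a bookkeeping nuisance that a cleverer counting argument will remove: it is a genuine gap, and it cannot be closed, because under the reading ``cycle $=$ doubly covering subfamily of $\mathcal{E}(H)$'' the statement is false. Take $V=\{1,2,3,4\}$ and $\mathcal{E}=\{\{1,2\},\{2,3\},\{1,3,4\}\}$. The closure construction creates $\{1,3\}$ from the first two edges and then $\min$ deletes $\{1,3,4\}$, so $\overline{H}$ has circuit set $\{\{1,2\},\{2,3\},\{1,3\}\}$ with $4$ a coloop; hence $\rk(H)=2$ and $|\mathcal{E}|=3>4-2=|V|-\rk(H)$. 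Yet no subfamily of $\mathcal{E}$ is doubly covering: no edge contains another, and in the full triple the vertex $4$ lies in $\{1,3,4\}$ but in no other edge. So $H$ contains no matroidal cycle. Note that in this example your circuit-selection map $e\mapsto c(e)$ is even injective ($\{1,2\}\mapsto\{1,2\}$, $\{2,3\}\mapsto\{2,3\}$, $\{1,3,4\}\mapsto\{1,3\}$), so injectivity is not the real issue. The deeper problem is that Proposition~\ref{prop-a} then yields a doubly covering family of \emph{circuits of $\overline{H}$}, and pulling each circuit back to the edge of $H$ containing it destroys double coverage, since an edge can stick out of the union (as $\{1,3,4\}$ does here). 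The cycle you produce lives in $\overline{H}$, not in $H$.

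You should also know that the paper's own one-line proof makes exactly the leap you were too careful to make: it asserts that the edge set of $H$ is a subset of the circuit set of ``some matroid $M$''. In the example above no such matroid exists: if $\{1,2\}$ and $\{2,3\}$ are both circuits, circuit elimination forces a circuit inside $\{1,3\}$, which would then be properly contained in $\{1,3,4\}$, so $\{1,3,4\}$ cannot also be a circuit. Your two correct steps (the identity $|V|-\rk(H)=\eta(V)$ and the monotonicity bound $\eta\bigl(\cup\mathcal{E}\bigr)\le\eta(V)$) are fine, but the proposition is only salvageable if one either weakens the conclusion to ``$\overline{H}$ contains a cycle'' (where the counting problem you describe remains, but among genuine circuits), or restricts to hypergraphs whose edges are themselves circuits of $\overline{H}$ --- for instance $H$ a matroid, in which case $\overline{H}=H$ and both your argument and the paper's go through verbatim. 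Your contrapositive sketch hits the same wall: in the example, $H$ is peelable with private vertices $4,1,2$, but the nullity of the closure is $2$, not $3$, so the inductive claim ``each peeled edge contributes one to the nullity'' is exactly what fails.
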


\begin{proof}
An edge set, satisfying the inequality, also satisfies $\{E\subseteq\mathcal{C}:|E|>n(\cup_{\mathcal{C}\in E}C)\}$ for some matroid $M$ with a collection of circuits $\mathcal{C}$. Therefore, it belongs to $\mathcal{A}_0$ of some $M$ and, by Proposition \ref{prop-a}, contains a cycle.
\end{proof}

\begin{defn}
We consider an edge to be \textbf{proper}, if it is neither a superset nor equal to some edge in the closure. 
\end{defn}

We want to think about proper edges as edges, adding new information about dependencies. Also, since we consider simple hypergraphs, it also cannot be a subset of an existing edge.

\begin{theorem}\label{a}
Adding a new proper edge to $H$ always decreases the rank.
\end{theorem}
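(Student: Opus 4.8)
The plan is to read Theorem~\ref{a} as a statement about the matroid $\overline H$ on the vertex set $V$: writing $M=\overline H$ with circuit set $\mathcal C$, the quantity $\rk(H)$ is exactly the matroid rank $r_M(V)$, i.e.\ the size of a maximal vertex set containing no member of $\mathcal C$. Since re-closing an already matroidal hypergraph changes nothing, adding the proper edge $e$ produces the matroid $M'=\overline{H'}$ obtained as the matroidal closure of $\mathcal C\cup\{e\}$. First I would record the two consequences of properness that drive everything: because $e$ is not a superset of any circuit, $e$ is independent in $M$, so $r_M(e)=|e|$ and every vertex outside a basis has a fundamental circuit; because $e$ is not a subset of any circuit, for every $C\in\mathcal C$ we have $e\cap C\subsetneq e$, and as a subset of the independent set $e$ this intersection is itself independent, hence never an edge. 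Thus the operation $\epsilon$ applied to $e$ and any circuit $C$ at a shared vertex $v$ is never blocked by the clause $A_1\cap A_2\notin\mathcal E$, and genuinely produces $(e\cup C)\setminus\{v\}$.

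Next I would dispatch the inequality $\rk(H')\le\rk(H)$: adding an edge only creates dependencies, so every circuit of $M$ remains dependent in $M'$, and hence every $M'$-independent set of vertices is $M$-independent. The only subtlety is that a blocked $\epsilon$-step occurs precisely when the relevant intersection already contains a circuit, so the upward-closed family underlying the closure never loses members; this is what keeps monotonicity intact even though $\epsilon$ itself is not monotone. Consequently a maximal $M'$-independent vertex set is $M$-independent, and the entire content of the theorem is the strict drop.

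For strictness I would show that every basis $B$ of $M$ becomes dependent in $M'$; since any $M'$-independent set of size $\rk(H)$ would be such a basis, this forces $\rk(H')\le\rk(H)-1$. Fix a basis $B$. If $e\subseteq B$ we are done, since $e$ is an edge of $M'$ and hence contains an $M'$-circuit lying inside $B$. Otherwise list $e\setminus B=\{x_1,\dots,x_m\}$ and eliminate these vertices one at a time into $B$ using fundamental circuits: set $G_0=e$ and $G_{j+1}=(G_j\cup C_M(x_{j+1},B))\setminus\{x_{j+1}\}$, where $C_M(x_{j+1},B)\subseteq B\cup\{x_{j+1}\}$ is the fundamental circuit of $x_{j+1}$ with respect to $B$ in $M$. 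Since each added fundamental circuit meets only $B\cup\{x_{j+1}\}$ and the removed vertices lie outside $B$, one checks inductively that $G_j\subseteq B\cup\{x_{j+1},\dots,x_m\}$ with $x_{j+1},\dots,x_m\in G_j$; thus $G_m\subseteq B$, and $G_m$ belongs to the upward-closed family of the closure, exhibiting an $M'$-circuit inside $B$.

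The main obstacle is justifying that each elimination step is a legitimate $\epsilon$-move, i.e.\ that the intersection $G_j\cap C_M(x_{j+1},B)$ is never already an edge of the running closure; otherwise the step is blocked and the reduction stalls at a circuit with a single vertex outside $B$ (a fundamental circuit), which cannot be pushed inside. Properness handles the first step exactly, and the role of the hypothesis that $e$ is not contained in any circuit is to propagate this: the $U_4^2$ example with $e=\{1,2\}\subseteq\{1,2,3\}$, which is independent but not proper, blocks already at the first step and indeed leaves the rank unchanged, so the hypothesis cannot be relaxed to mere independence. I expect to close the gap by carrying an invariant through the induction — for instance that $G_j$ is never contained in a circuit of $M$, so that $G_j\cap C_M(x_{j+1},B)$ stays a proper (hence independent, hence non-edge) subset of a circuit — with the degenerate case $C_M(x_{j+1},B)\subseteq G_j$ treated separately by noting that it already deposits a circuit inside $B$. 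Making this invariant precise and stable under the elimination is the real work of the proof.
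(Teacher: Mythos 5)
Your proposal has two genuine gaps, and both sit exactly where the real difficulty of the theorem lies. First, the monotonicity step. The claim that ``adding an edge only creates dependencies, so every circuit of $M$ remains dependent in $M'$'' is not a property of this closure operator, and your justification for it is false: blocked $\epsilon$-steps can permanently destroy dependencies. Take $\mathcal{E}=\{\{1,2,3\},\{1,2,4\}\}$: the elimination at the intersection $\{1,2\}$ is allowed, the closure consists of all four $3$-subsets of $\{1,2,3,4\}$ (i.e.\ $U_4^2$), and the rank is $2$. Now add the edge $\{1,2\}$: every pairwise intersection in the enlarged family equals $\{1,2\}$, which is now itself an edge, so every $\epsilon$-step is blocked; the closure collapses to the single circuit $\{1,2\}$, the sets $\{1,3,4\}$ and $\{2,3,4\}$ have become independent, and the rank has gone \emph{up} to $3$. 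So ``the upward-closed family underlying the closure never loses members'' is simply wrong, and since your argument for $\rk(H')\le\rk(H)$ never invokes properness, it establishes nothing. (The same example undercuts your opening reduction $\overline{H+e}=\overline{\overline{H}+e}$, which does not follow from idempotence and would need its own proof.) Showing that dependencies survive when the added edge is proper is essentially the hard half of the theorem; in the paper this is the job of Lemma~\ref{b}, which shows that a closure edge destroyed by $\min$ is replaced in $\overline{H'}$ by smaller subedges covering each of its vertices.

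Second, the strictness step is incomplete at its crux, as you yourself admit, and the invariant you propose cannot be repaired in the form stated. The blocking condition in $\epsilon$ refers to the evolving edge family of the closure of $H'$, not to $M$, and proper subsets of $M$-circuits --- although $M$-independent --- routinely \emph{are} edges of $\overline{H'}$: creating such sub-circuit edges is precisely what adding a proper edge does. Concretely, for $H=\{\{1,2,3\},\{3,4,5\}\}$ one has $\overline{H}=\{\{1,2,3\},\{3,4,5\},\{1,2,4,5\}\}$, and adding the edge $\{1,4\}$ makes $\{2,4,5\}\subsetneq\{1,2,4,5\}$ an edge of $\overline{H'}$; so ``proper subset of a circuit, hence independent, hence not an edge'' fails. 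This example also shows that your reading of ``proper'' is narrower than the paper's: the paper only forbids the new edge from containing (or equalling) a closure edge, and from lying inside an edge of $H$ itself; a new edge strictly inside a closure circuit, such as $\{1,4\}$ above, is proper, and Case 1 of the paper's proof is devoted exactly to this situation. Your hypothesis that $e$ is not a subset of any circuit excludes that case, so even a completed version of your induction would only cover the paper's Case 2.
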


\begin{proof}
Case 1: the new edge is a subedge of some edge from of the matroidal closure of~$H$.

Denote this edge $e^*$. 

Every new edge we make while constructing the closure, is a result of the union of two edges minus a vertex from their intersection. Let us call the said two edges \textit{parents} and the new edge their \textit{child}. Denote the parents of $e$ $A$ and $B$. Also denote the vertex we remove~$v$.

First, let us assume that both parents belong to the closure of $H'$.

\begin{lemma}\label{b}

No matter which vertex of $e$ we pick, there exists a subedge of $e$, smaller than $e$ and containing that vertex.
\end{lemma}

\begin{proof}
We construct new edges $A'$ and $B'$ by removing some element from intersections with $e^*$ like so:
\[
A'=A\cup e^*\setminus v'
\]
\[
B'=B\cup e^*\setminus v''
\]

Here we choose $v'$ and $v''$ so that at least one of them does not belong to the intersection of $A$ and $B$.

What we know:
\begin{itemize}
    \item $v'$ and $v''$ $\in e$
    \item $v\notin e$
    \item $v\in A$, $v\in A'$
    \item $v\in B$, $v\in B'$
\end{itemize}

What is left to do, is to take an intersection of our newly created edges and remove~$v$:
\[
C=A'\cup B'\setminus v
\]

Since at least one of the vertices $v'$ and $v''$ does not belong to the intersection of $A$ and $B$, the child $C$ will not be equal to $e^*$ and therefore will be a subset of it. We were free to choose any vertex that does not belong to $A\cap B$, which means that we are able to generate any subset of $e$, not containing such vertex.

If we aim to generate a subedge of $e$ without one of the vertices from $A\cap B$, all we need to do is to remove the same vertex from both, $A$ and $B$, so that the parents $A'$ and $B'$ do not contain it. Then the child $C$ will not contain it as well.

The Lemma \ref{b} is proved.
\end{proof}

\begin{lemma}\label{c}
The rank of the subhypergraph made by $A$, $B$ and $e$ is greater than the rank of the graph, induced by $A$, $B$ and $e^*$.
\end{lemma}

\begin{proof}
It is easy to see that the rank of $A\cup B\cup e$ is $|e|-1$. Since by Lemma \ref{b}, any of the vertices of $e^*$ is in some subedge of the graph, induced by $A$, $B$ and $e^*$, the rank must be at least 1 vertex smaller.
\end{proof}

Now consider the case where at least one of the parents is not a part of the matroidal closure of $H'$. Wlog assume that it is the edge $A$ and before constructing it we were able to construct $A'\subsetneq A$. Then we treat $A'$ the same way we treated $e^*$. We consider the parents of $A$ and using Lemma \ref{b} prove that the vertices of $A$ will all now belong to some subsets of $A$. If the parents of $A$ are also not in $H'$ because of their subsets being made first, we consider them as new vertices and again apply Lemma \ref{b} and so on. What we get in the end is that all edges that were affected by their subsets being constructed first, will now be "cut" into smaller subsets and therefore all vertices of edge $e$ will belong to some subsets of $e$. By Lemma \ref{c}, the rank of the induced subhypergraph on $V(A\cup B)$ will have a smaller rank on $H'$ than on $H$.

\begin{lemma}
The local rank change, caused by the new edge $e^*$, changes the rank of the hypergraph.
\end{lemma}

\begin{proof}
Suppose that is not the case. We consider the closures $\overline{H}$ and $\overline{H'}$.

Note that every independent vertex must belong to at least one edge that has only one dependent vertex. More so, every dependent vertex belongs to some edge, where it is the only dependent vertex.

Since all of the vertices of $e$ are in some smaller subsets of $e$, there must be at least two vertices of $e$ that are dependent.

Since this is the closure, every dependent vertex belongs to an edge such that all but one vertices contained in it are independent. Let's call the set of such edges \textit{special}.

Now we will describe an algorithm that finds an edge, containing only dependent vertices.

Algorithm: we take $e$ and all of the special edges that share vertices with $e$. We apply the union minus a vertex operation to $e$ and one of the edges that share a dependent vertex. The new edge $e^{(1)}$ now contains more vertices from independent set and one less vertex not from independent set. If we run into a subedge $S*$ of $e^{(i)}\cup S$, for some special edge $S$, we apply the operation on $e^{(i)}$ and $S*$ first and remove some other dependent vertex from their union. The subedge must contain other dependent vertices of $e$ (otherwise it would only contain independent vertices). After this, $e^{(i+1)}\cup S$ no longer contains $S*$ and does not prevent us from the union minus a vertex operation on $e^{(i+1)}$ and $S$. We repeat the process until our new edge $e^{(k)}$ only contains the vertices from the dependent set. This contradicts our statement. $\lightning$
\end{proof}

Case 2: the new edge $e^*$ is not a subedge of the matroidal closure of $H$.

Lemma \ref{b} shows that any edge, that cannot be constructed because of its subedge being constructed first, will be "cut" into smaller subedges.

We look at the closure of $H'$. Because of the possible "cuts" that were made on the edges of $H$, the maximal independent sets of $H$ and $H'$ will have to use the same vertices (this is under assumption that the rank did not decrease). Because of this, we can take some basis of $H$ and analyse it on $H'$.

Assume there exists a basis on $H$ such that at least one vertex of $e^*$ is dependent. Since we had the same basis in $H$, every dependent vertex is in some special edge that is not equal to $e^*$. This means that we can apply the same algorithm on $e^*$ and produce an edge containing only dependent vertices. $\lightning$

This proves Theorem \ref{a}.
\end{proof}

\begin{theorem}\label{ineq}
For all hypergraphs,
    \[
    |E|\geq|V|-\rk(H).
    \]

\end{theorem}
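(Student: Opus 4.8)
{\em Analysis of the statement.}
The inequality to prove is $|E|\ge |V|-\rk(H)$ for every hypergraph $H=(V,E)$. This is the ``hypertree'' counterpart of the earlier cycle-existence proposition, which asserted that $|E|>|V|-\rk(H)$ forces a cycle. The two together say that $|E|=|V|-\rk(H)$ is exactly the cycle-free threshold, so the natural strategy is to show that the reverse strict inequality $|E|<|V|-\rk(H)$ is impossible, i.e. that one cannot have ``too few'' edges relative to the rank deficiency.

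{\em Plan of attack.}
The plan is to bound $\rk(H)$ from below directly in terms of $|V|$ and $|E|$, showing $\rk(H)\ge |V|-|E|$, which is exactly the claimed inequality rearranged. Recall that $\rk(H)$ is the size of a largest set $B\subseteq V$ containing no edge of the matroidal closure $\overline{H}$; equivalently, the maximal number of \emph{independent} vertices. First I would construct such an independent set greedily: process the edges of $\overline{H}$ one at a time, and for each edge that is not yet ``covered'' (does not yet have a dependent vertex assigned to it), declare one of its vertices dependent. Since each edge can force at most one vertex to become dependent, after processing all edges at most $|E_{\overline H}|$ vertices are dependent, leaving an independent set of size at least $|V|-|E_{\overline H}|$. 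The subtlety is that the closure may contain more edges than $H$ itself, so this count must instead be run against $H$, using that every closure edge contains an original edge.

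{\em Key steps in order.}
First I would fix a maximal independent set $B$ realizing the rank, and consider the dependent set $D=V\setminus B$, so $\rk(H)=|B|$ and $|D|=|V|-\rk(H)$. Second, using the characterization stated just before Theorem~\ref{a}, every dependent vertex $v\in D$ lies in a \emph{special} edge, namely an edge of the closure whose only dependent vertex is $v$. Third, I would set up an injection (or at least a surjection in the other direction) from $D$ into $E$: each dependent vertex determines a special edge, and I would argue these can be traced back to distinct original edges, or alternatively that distinct dependent vertices cannot all be ``witnessed'' by fewer than $|D|$ edges. This yields $|D|\le |E|$, i.e. $|V|-\rk(H)\le |E|$, which is the theorem.

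{\em Main obstacle.}
The hard part will be the injectivity/counting step: a single closure edge could in principle serve as the special edge for several dependent vertices if one is not careful, and special edges live in $\overline H$ rather than in $E$, so one must descend from closure edges back to original edges of $H$ without collisions. I expect this to require the earlier structural observation that every closure edge contains an original edge, combined with a careful choice (a matching argument, or an exchange/augmentation argument in the spirit of matroid basis exchange) guaranteeing that the $|D|$ witnesses can be taken distinct and pulled back injectively into $E$. An alternative, possibly cleaner, route is to invoke the contrapositive of the preceding cycle proposition together with an induction on $|E|$: if $|E|<|V|-\rk(H)$ one should be able to delete an edge without increasing the rank deficiency beyond what the induction hypothesis allows, deriving a contradiction; the delicate point there is controlling how $\rk(H)$ changes under edge deletion.
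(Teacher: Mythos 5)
Your plan has a genuine gap, and it sits exactly where you yourself say the hard part is: the injection from dependent vertices into the \emph{original} edge set $E$ is never constructed. What your counting does establish is that distinct dependent vertices have distinct special edges (each special edge contains exactly one dependent vertex), hence the number of edges of the closure $\overline{H}$ is at least $|V|-\rk(H)$. But that is just the standard matroid fact that the number of circuits is at least the nullity (via fundamental circuits of a basis), and it says nothing about $|E|$: the whole difficulty of Theorem~\ref{ineq} is that $\overline{H}$ can have vastly more edges than $H$. Worse, the structural fact you propose to use for the descent --- that every closure edge contains an original edge --- is false. Take $H$ with edges $A_1=\{a,b,v\}$ and $A_2=\{c,d,v\}$: the closure contains $(A_1\cup A_2)\setminus\{v\}=\{a,b,c,d\}$, which contains neither original edge. (The fact proved in the paper's closure theorem is that every edge of $\mathcal{F}_i$ contains a \emph{minimal closure} edge, i.e.\ an element of $\mathcal{E}'$, not an element of the original $\mathcal{E}$.) So an edge-by-edge pullback of special edges to $E$ cannot work, and you offer no replacement for it.

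For comparison, the paper's own argument is quite different and avoids this counting problem entirely. It assumes a counterexample $|E|<|V|-\rk(H)$ exists, enlarges it to a hypergraph $H$ that is \emph{maximal} subject to this strict inequality, and then adds one more proper edge $e^*$ to obtain $H'$. Maximality gives $|E|+1\geq |V|-\rk(H')$, while the assumed strict inequality gives $|E|+1\leq |V|-\rk(H)$; together these show that adding $e^*$ cannot strictly decrease the rank, contradicting Theorem~\ref{a}, which asserts that adding a proper edge always decreases the rank. This cleverly sidesteps any quantitative control on how much a single edge can change the rank. Your fallback suggestion (induction on $|E|$ via edge deletion) is a legitimately different strategy, but it hinges on precisely such a quantitative lemma --- that deleting one edge increases $\rk$ by at most one --- which you flag as the delicate point and do not prove; so that route is also incomplete as it stands. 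To salvage your original counting idea you would need something like ``a matroid whose circuit set is generated under circuit elimination by $k$ circuits has nullity at most $k$,'' and proving that is essentially the theorem itself.
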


\begin{proof}
Suppose that there exists a hypergraph $H_1$ with $|E|<|V|-\rk(H_1)$. We can add new edges to it until it is maximal while still satisfying the inequality. We call this hypergraph~$H$.

We now add a new \textbf{proper} edge $e^*$ to $H$ and call our new hypergraph $H'$.

Note: we can always find a proper edge to add. If that was not the case, we would be able to keep adding edges until our hypergraph turns into complete graph and for any complete graph $|E(G)|=\binom{n}{2}>n-1=|V(G)|-\rk(G)$.

Since $H$ was maximal, we know that such inequality must hold:

\[
|E(H')|\geq|V(H')|-\rk(H').
\]

Considering the relations between $H$ and $H'$, we get such system of inequalities:
\[
\begin{cases}
|E(H)|<|V(H)|-\rk(H)\\
|E(H)|+1\geq |V(H)|-\rk(H').
\end{cases}
\]

It follows that 
\[
\rk(H)-\rk(H')\leq1.
\]

Since the rank function can only have natural values, this means that adding $e^*$ should not change the rank.


By the Theorem \ref{a}, every proper edge decreases the rank of $H$, so we get a contradiction.~$\lightning$


\end{proof}

\begin{defn}
A hypergraph with the minimal amount of edges required to describe its dependencies, will be called a \textbf{matroidal tree}.
\end{defn}

\begin{defn}
If a hypergraph $H$ with $|E|=|V|-\rk(H)$ does not contain a cycle, we will call it a \textbf{natural tree}.
\end{defn}

Note that, for example, regular graph trees are natural trees, since the equation is equivalent to $|E|=|V|-c(G)$, where $c(G)$ denotes the number of connected components.

In case the distinction between matroidal and natural trees would be brought into question, we offer an example of a hypergraph that is a matroidal tree but not a natural tree:

\begin{center}
\includegraphics[scale=0.5]{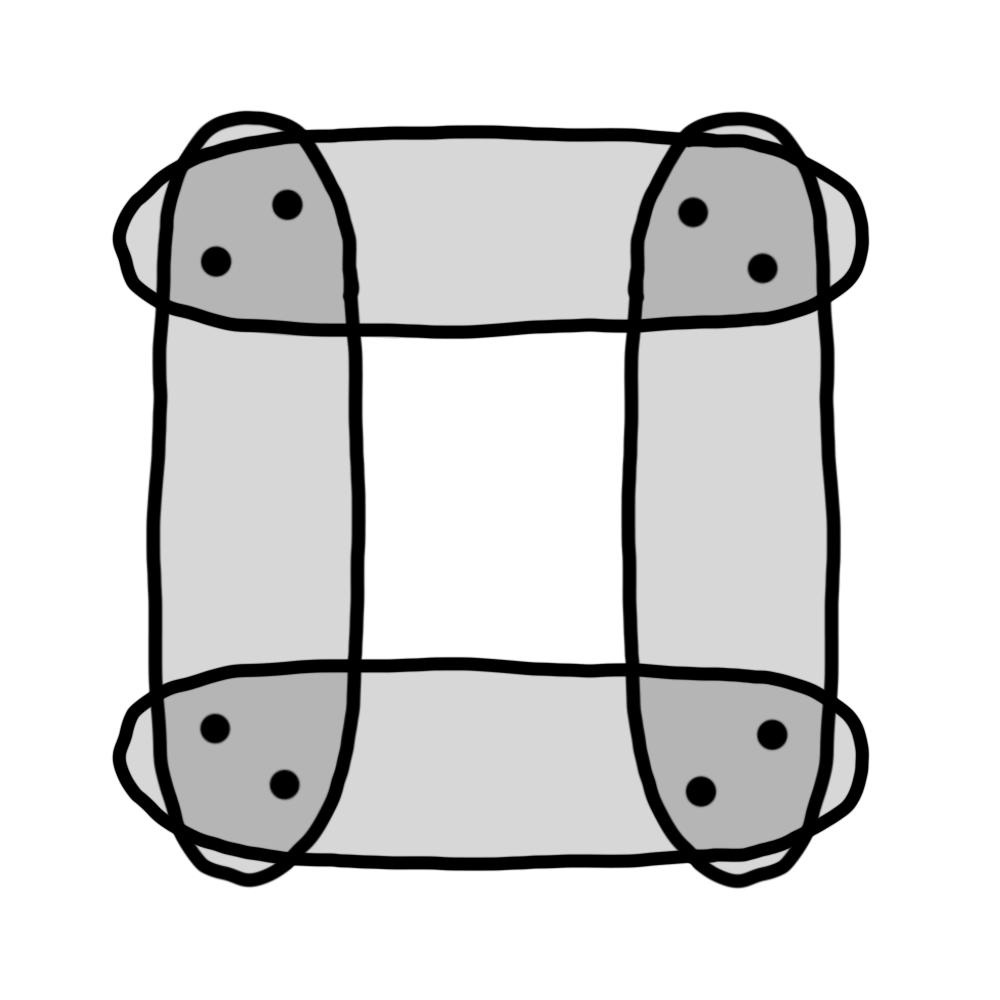}
\end{center}

As one can see, a double covering does not ensure the "matroidal dependency". In Section 6, we conjecture that such hypergraphs, that are not natural trees, cannot be derivatives of matroids. This example also suggests that such hypergraphs might describe a different type of dependency, that is not "algebraic".

\section{Closure families}

Definition of a closure describes a structure that is shared amongst a number of hypergraphs. In this section, we categorise those hypergraphs according to what closure they produce and analyse the properties of these categories. The idea of how one of the said categories will look like is formalised here:

\begin{defn}
For a matroid $M$, let $\mathcal{H}_M=\{H:\overline{H}=M\}$. A family of the form $\mathcal{H}_M$ for some $M$ is called a \textbf{closure family}.
\end{defn}

\begin{defn}
A hypergraph that belongs to a family that contains a natural tree, will be called \textbf{natural}.  
\end{defn}

\begin{defn}
We call a family $\delta\mathcal{H}$ a \textit{derived family} of $\mathcal{H}$ if for $H\in\mathcal{H}$, $\delta\overline{H}\in\delta\mathcal{H}$.
\end{defn}

Now in order to be able to analyse the behaviour of the families of higher derivations, we define the $k$'th combinatorial derivative of a matroid.

\begin{defn}
We define the $k$-th derivative as $\delta_{k+1}H:=\delta\delta_k H$. If $\delta_{k}H=\delta_{k+1}H$ for some $k\in\mathbb{N}$, then we will say that the derived hypergraph \textbf{converges} and has a \textbf{limit} $\lim_{\delta}H=\delta_{k}H$. Otherwise, we say that the hypergraph \textbf{diverges}.   
\end{defn}

\begin{defn}
We say that a closure family $\mathcal{H}$ \textbf{fades} if $\lim_\delta\mathcal{H}=\emptyset$.
\end{defn}

\begin{ex}
One example of a fading family could be a closure family $\mathcal{H}_1$, depicted (up to isomorphism) in the image below.
\begin{center}
    \includegraphics[scale=0.5]{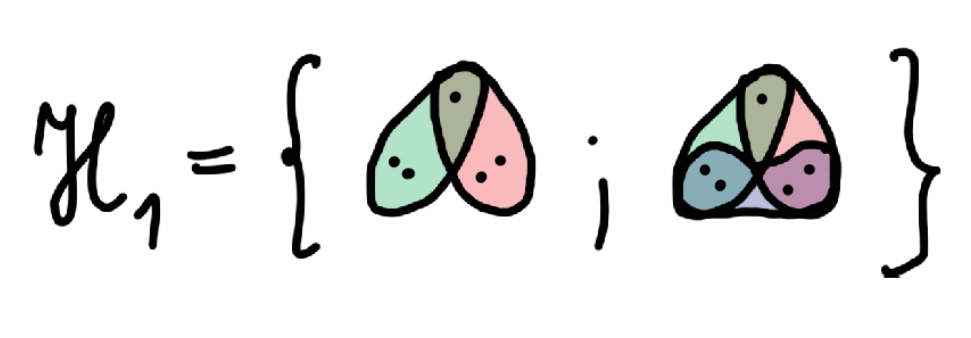}
\end{center}

The derived family of $\mathcal{H}_1$ contains only one hypergraph that has a single edge.

\begin{center}
    \includegraphics[scale=0.5]{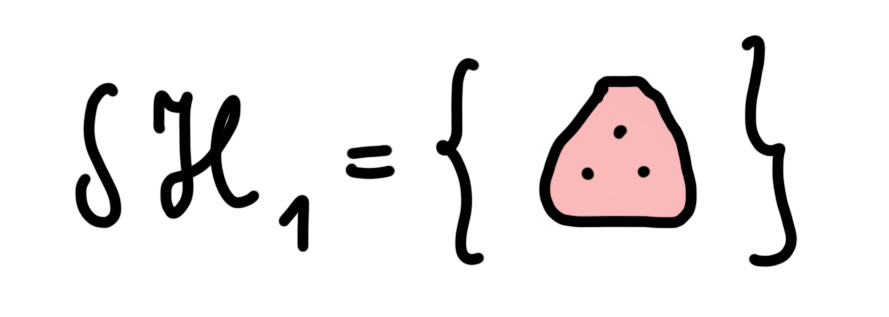}
\end{center}

The derived family of $\delta\mathcal{H}_1$ is empty since there were no cycles in $\delta\mathcal{H}_1$. Therefore, we can conclude that $\mathcal{H}_1$, as well as $\delta\mathcal{H}_1$, fades.
\end{ex}

\begin{ex}\label{ex}
    
A good example of a not fading family is the closure family $\mathcal{H}_2$, which contains a complete $3$-regular hypergraph $K_4^3$.

\begin{center}
\includegraphics[scale=0.5]{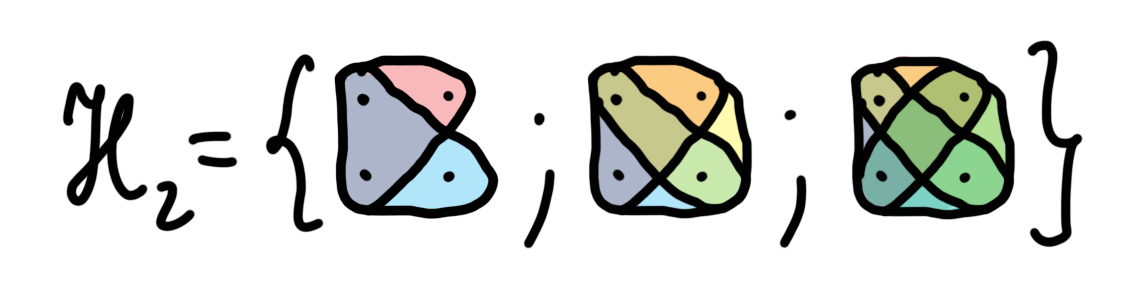}
\end{center}

The family does not fade, since it is equal to its closure:

\begin{center}
\includegraphics[scale=0.5]{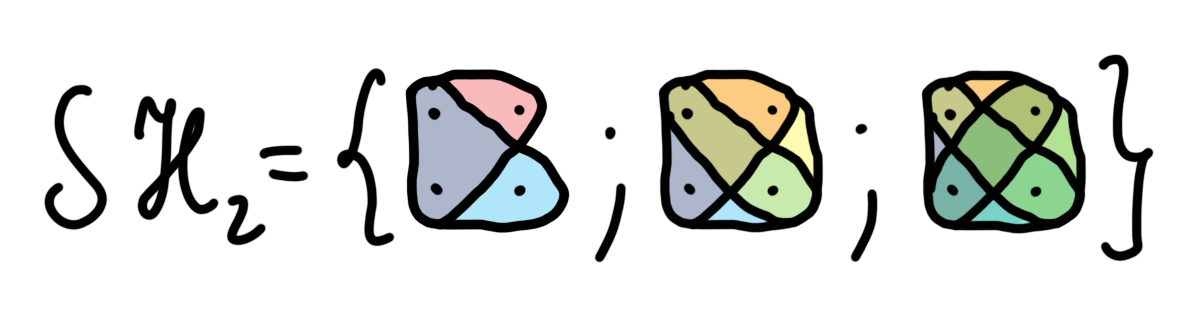}
\end{center}

\end{ex}

\begin{proposition}\label{NoFade}
Every closure family, that contains a hypergraph with $K^3_4$ as a subgraph, does not fade.
\end{proposition}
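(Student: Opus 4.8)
The plan is to show that containing $K_4^3$ as a subgraph forces the derived family to remain nonempty at every stage, by leveraging Example~\ref{ex}, which establishes that the closure family $\mathcal{H}_2$ of $K_4^3$ is a fixed point of the derivation operator (i.e. $\delta \mathcal{H}_2 = \mathcal{H}_2$, since $K_4^3$'s closure equals its own derived matroid). The key idea is \emph{monotonicity}: if $H$ contains $K_4^3$ as a subgraph, then the cycles of $K_4^3$ are among the cycles of $H$, so the ``derived skeleton'' of $H$ contains the derived skeleton of $K_4^3$ as a subgraph. Thus the property ``contains $K_4^3$'' should be preserved under derivation, and since $K_4^3$ itself never fades (its derived family always contains a copy of $K_4^3$), neither can $H$.

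\textbf{First} I would make precise the subgraph-monotonicity of the derivation. If $K_4^3 \subseteq H$ (as edge sets on a common or enlarged vertex set), then every doubly covering subset of the edges of $K_4^3$ is still doubly covering in $H$, so every matroidal cycle of $K_4^3$ remains a cycle in $H$ (possibly it is no longer minimal, but it still contains a cycle, by the minimality argument in the definition of matroidal cycle). Translating via Proposition~\ref{prop-a} and the discussion after it, the cycles of $K_4^3$ give rise to dependent sets in $\delta H$, so $\delta(K_4^3)$ embeds into $\delta H$ as a subgraph. \textbf{Next}, using Example~\ref{ex}, I would observe that $\delta(K_4^3)$ again contains (an isomorphic copy of) $K_4^3$, because $K_4^3$'s closure is self-derived. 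Combining these two facts, $\delta H$ contains $K_4^3$ as a subgraph whenever $H$ does.

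\textbf{Then} the proposition follows by induction on the derivation index $k$: the base case is the hypothesis $K_4^3 \subseteq H$, and the inductive step is precisely the monotonicity-plus-self-derivation claim above, giving $K_4^3 \subseteq \delta_k H$ for all $k$. In particular $\delta_k H$ is nonempty (indeed contains cycles) for every $k$, so $\lim_\delta \mathcal{H}_M \neq \emptyset$ and the closure family does not fade.

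\textbf{The hard part will be} making the subgraph-monotonicity of $\delta$ genuinely rigorous, since the closure operation $\min\epsilon$ is not monotone in the naive sense: adding edges to $H$ can merge or shrink edges in the closure (exactly the ``cutting'' phenomenon exploited in Theorem~\ref{a}), so a priori the closure of a subgraph need not sit inside the closure of the ambient hypergraph. I would address this by working at the level of cycles and dependent edge-sets rather than at the level of closures directly: what I really need is that the \emph{cycles} of $K_4^3$ survive as dependencies in $\delta H$, which is robust because double-covering is preserved under adding edges, and a doubly covering set always contains a cycle. This side-steps the non-monotonicity of $\min\epsilon$ and isolates the one structural input from Example~\ref{ex}, namely that the derived matroid of $K_4^3$ contains $K_4^3$ again.
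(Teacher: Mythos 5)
Your strategy is essentially the paper's own argument made explicit: the paper's entire proof is the observation that $\delta K_4^3=K_4^3$, silently assuming that a $K_4^3$-subgraph persists under closure and derivation, and you correctly identify this persistence as the actual content to be proved. However, your proposed repair does not close the gap. The problem is that fading is defined through the matroid $M=\overline{H}$: by the definition of the derived family, what gets iterated is $\delta\overline{H}$, the combinatorial derived matroid of the \emph{closure}, whose ground set is $\mathcal{C}(\overline{H})$ --- not the edge set of $H$. Working ``at the level of cycles of $H$'' does not side-step the non-monotonicity of $\min\epsilon$; it ignores it. The edges of the embedded $K_4^3$ need not be circuits of $\overline{H}$ at all (the closure may create smaller edges inside them, exactly the ``cutting'' phenomenon you mention), so they need not even be points of $\delta H$, and the inductive step $K_4^3\subseteq H\Rightarrow K_4^3\subseteq\delta H$ collapses. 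Note also that Proposition~\ref{prop-a} points the wrong way for your purposes: it says dependent sets of $\delta M$ contain cycles, not that cycles of the hypergraph $H$ give dependent sets of $\delta\overline{H}$.

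The failure is not cosmetic: take $H$ on vertices $\{1,2,3,4\}$ with edges $\{1,2,3\}$, $\{1,2,4\}$, $\{1,3,4\}$, $\{2,3,4\}$ and additionally $\{3,4\}$. This $H$ contains $K_4^3$ as a subgraph, but in the closure the pair $\{3,4\}$ cuts the two triples containing it, and one computes $\overline{H}=\bigl(\{1,2,3,4\},\{\{1,2,3\},\{1,2,4\},\{3,4\}\}\bigr)$, which is a tricycle. By Lemma~\ref{LmFade}, $\delta\overline{H}=U_3^2$, $\delta^2\overline{H}=U_1^1$, $\delta^3\overline{H}=\emptyset$, so this closure family fades although it contains a hypergraph with $K_4^3$ as a subgraph. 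Thus your monotonicity lemma is false, and indeed the proposition itself is false under the literal ``subgraph'' hypothesis --- a defect shared by the paper's one-line proof, which only treats the case $H=K_4^3$. A correct statement needs the $K_4^3$ to survive into the closure, e.g., the hypothesis that the four triples are circuits of $M$ itself; then the component of $M$ containing them has at least four circuits, hence is neither a single circuit nor a tricycle, and non-fading follows from the classification in the final section rather than from any subgraph-monotonicity of $\delta$.
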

\begin{proof}
Since $\delta K_4^3=K_4^3$, the limit of such hypergraph is $K_4^3\neq\emptyset$.
\end{proof}

\begin{corollary}\label{regular}
Every closure family, that contains a hypergraph with an $r$-regular clique, where $r>2$, as a subgraph, does not fade.
\end{corollary}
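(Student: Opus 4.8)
The plan is to reduce the statement to Proposition~\ref{NoFade} by showing that a single application of $\delta$ to an $r$-regular clique already produces a hypergraph containing $K_4^3$ as a subgraph. Write $K_{r+1}^r$ for the complete $r$-regular hypergraph on $r+1$ vertices, whose $r+1$ edges are exactly the $r$-subsets of $[r+1]$; every larger $r$-regular clique $K_n^r$ contains such a $K_{r+1}^r$ on any $r+1$ of its vertices, and more generally any hypergraph $H$ in the family contains one. Since being doubly covering and being minimal are both intrinsic to an edge set, any matroidal cycle of $K_{r+1}^r$ remains a matroidal cycle inside the ambient clique (and inside any $H$ containing it), so it suffices to analyse $K_{r+1}^r$ itself.

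The main step is the computation $\delta K_{r+1}^r = K_{r+1}^3$. First I would identify each facet of $K_{r+1}^r$ with the unique vertex it omits, so that the $r+1$ facets are indexed by $[r+1]$, and then claim that the matroidal cycles are precisely the $3$-element sets of facets. For the doubly covering property, three facets omitting distinct vertices $a,b,c$ have union all of $[r+1]$, and the facet omitting $a$ lies in the union of the other two; two facets never doubly cover, since two distinct $r$-subsets of an $(r+1)$-set cannot contain one another. Hence every $3$-set of facets is doubly covering and minimal, while any set of at least four facets has a doubly covering $3$-subset and so is not minimal. Viewing $K_{r+1}^r$ as the circuit hypergraph of $U_{r+1}^{r-1}$, this matches the count through $\mathcal{A}_0$: a set $A$ of facets lies in $\mathcal{A}_0$ exactly when $|A|>\eta(\cup A)$, and since $\eta(\cup A)=(r+1)-(r-1)=2$ whenever $\cup A=[r+1]$, the minimal such $A$ are the triples. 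As these triples already form the circuits of the matroid $U_{r+1}^2$, the upward closure introduces no smaller circuits, so $\delta K_{r+1}^r=U_{r+1}^2$, whose circuit hypergraph is $K_{r+1}^3$.

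Finally, because $r>2$ we have $r+1\geq 4$, so restricting $K_{r+1}^3$ to any four of its vertices yields a copy of $K_4^3$; thus $\delta K_{r+1}^r$, and hence $\delta H$ for any $H$ containing the clique, contains $K_4^3$ as a subgraph. Proposition~\ref{NoFade} then shows that this derived family does not fade, and therefore neither does the original family, which proves Corollary~\ref{regular}. The one genuinely non-routine point is the cycle computation of the previous paragraph: I must verify both that every triple of facets is a minimal doubly covering set and that nothing smaller or of a different shape occurs, so that $\delta K_{r+1}^r$ equals $K_{r+1}^3$ rather than merely containing it. The persistence claim—that a matroidal cycle of the sub-clique survives in the ambient hypergraph, and that containing $K_4^3$ after one derivation already forces non-fading—reuses exactly the mechanism behind Proposition~\ref{NoFade} and Example~\ref{ex}, where $\delta K_4^3=K_4^3$.
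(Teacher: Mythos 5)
Your proposal is correct and follows precisely the route the paper intends: the corollary is stated there without proof as an immediate consequence of Proposition~\ref{NoFade}, the implicit argument being exactly your reduction, namely that one application of $\delta$ to an $r$-regular clique yields a $3$-regular clique, $\delta K_{r+1}^r = K_{r+1}^3 \supseteq K_4^3$ (equivalently, $\delta U_{r+1}^{r-1}=U_{r+1}^2$), after which Proposition~\ref{NoFade} applies to the derived family. In fact you supply more detail than the paper does (the verification that the matroidal cycles of $K_{r+1}^r$ are exactly the triples of facets, and that the upward closure creates nothing smaller), and the one loose point in your write-up --- speaking of $\delta H$ for the ambient hypergraph $H$, whereas the derived family is defined through $\delta\overline{H}$ --- is the same imprecision already present in the paper's own proof of Proposition~\ref{NoFade}.
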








\begin{conjecture}
A family containing a tree that is not natural, cannot be the derivative of a matroidal family.
\end{conjecture}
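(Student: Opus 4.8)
The plan is to prove the contrapositive. Suppose $\mathcal{H}_M = \delta\mathcal{G}$ is a derived family; unwinding the definition of a derived family this means $\mathcal{G} = \mathcal{H}_N$ and $M = \delta N$ for some matroid $N$, so that $M$ is a matroid on the ground set $\mathcal{C}(N)$ whose circuits are the inclusion-minimal elements of $\mathcal{A} = \bigcup_i \mathcal{A}_i$. Assuming $\mathcal{H}_M$ contains a matroidal tree at all, I want to produce a \emph{natural} tree in $\mathcal{H}_M$. Since all hypergraphs in $\mathcal{H}_M$ share this family, exhibiting one natural tree shows that every matroidal tree in a derived family is a natural hypergraph, which is exactly the contrapositive of the statement.

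For the construction I would fix a basis $B$ of the matroid $M$ and take as candidate tree the hypergraph $T$ on vertex set $V(M) = \mathcal{C}(N)$ whose edges are the fundamental circuits $\{C_M(x,B) : x \notin B\}$. This $T$ has exactly $|V(M)| - \rk(M)$ edges, so it meets the equality $|E| = |V| - \rk$ from Theorem~\ref{ineq}. The decisive point is that $T$ is automatically cycle-free: each fundamental circuit $C_M(x,B)$ is the unique circuit inside $B \cup \{x\}$ and hence contains the element $x$, which lies in no other fundamental circuit; thus every edge of $T$ has a private vertex, no subcollection of edges can be doubly covering, and $T$ contains no matroidal cycle. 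Consequently, as soon as we know $\overline{T} = M$, the hypergraph $T$ is a natural tree and we are done.

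The heart of the argument is therefore to verify that $\overline{T} = M$ precisely when $M$ is a derived matroid. For a general matroid the closure of the fundamental circuits can \emph{undershoot} $M$: the single-vertex elimination in $\epsilon$ together with the constraint $A_1 \cap A_2 \notin \mathcal{E}$ may fail to regenerate every circuit, and this is exactly the situation that forces a minimal generator to carry a spurious double covering and makes $\mathcal{H}_M$ non-natural, as in the example of Section~5. When $M = \delta N$, I would instead argue that the closure recovers all of $\mathcal{C}(\delta N)$ by tracking the nullity function $\eta_N$: using the supermodular inequality for $\eta_N$ already employed in Proposition~\ref{prop-a}, every element of $\mathcal{A}_0(N)$ contains a genuine matroidal cycle certified by a strict drop of $\eta_N$, and these nullity-certified dependencies are exactly the ones reconstructible from the fundamental circuits by iterating $\epsilon$. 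Combined with Theorem~\ref{a} to control the rank at each step of the closure, this should yield $\overline{T} = M$.

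The main obstacle is this last step: showing that the dependencies of a derived matroid are all ``algebraic'', i.e. accompanied by an exact drop of $\eta_N$, and that such dependencies are precisely those the closure operation $\min\epsilon$ can rebuild from a cycle-free generator. Concretely one must rule out that an inclusion-minimal doubly covering subcollection $S$ among the circuits of $N$ satisfies $|S| = \eta_N(\cup S)$ rather than the strict inequality, since it is exactly the borderline equality case that produces the non-algebraic double covers distinguishing non-natural trees. Carrying this nullity bookkeeping through all the rounds of the iterated closure, and thereby separating the algebraic dependencies of $\delta N$ from the non-algebraic ones hinted at in Section~5, is the crux and the reason the statement is posed as a conjecture.
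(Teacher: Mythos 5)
Your proposal is not a proof, and it cannot be checked against the paper's argument because the paper offers none: this statement is posed as an open conjecture, with no proof given. What your sketch does establish is a clean reduction. The contrapositive framing is sound, the fundamental-circuit hypergraph $T$ on $\mathcal{C}(N)$ is a good candidate witness, and your private-vertex argument is correct: each edge $C_M(x,B)$ contains the element $x$ lying in no other fundamental circuit, so no nonempty subcollection of edges of $T$ is doubly covering, $T$ is cycle-free, and it has exactly $|V(M)|-\rk(M)$ edges. Hence if $\overline{T}=M$, then $T$ is a natural tree in $\mathcal{H}_M$ and every tree in that family is natural, as required.

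The genuine gap is the step you yourself flag as the crux: you never prove that $\overline{T}=M$ when $M=\delta N$, and this claim is not a technical detail but essentially a reformulation of the conjecture itself (restricted to families containing trees). The operation $\min\epsilon$ applied to the fundamental circuits produces sets of the form $(C_1\cup C_2)\setminus\{v\}$, which are dependent in $M$ but need not be circuits of $M$; taking minimal elements can therefore stabilize at a matroid whose circuits differ from $\mathcal{C}(M)$ --- this is exactly the ``undershooting'' exhibited by the non-natural tree example in Section~5. Your proposed remedy, the ``nullity bookkeeping,'' is asserted rather than argued: Proposition~\ref{prop-a} only says that elements of $\mathcal{A}_0$ contain cycles, and Theorem~\ref{a} only says that proper edges decrease rank; neither gives any mechanism by which iterating $\epsilon$ from the fundamental circuits of $\delta N$ regenerates all of $\mathcal{C}(\delta N)$. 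The sentence claiming that nullity-certified dependencies are ``exactly the ones reconstructible from the fundamental circuits by iterating $\epsilon$'' is precisely what would need to be proved, and no argument for either inclusion is given. So the proposal should be read as a possibly useful reduction strategy, not as a proof; the conjecture remains open.
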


\section{Iterated derived matroids}

In this section, we consider the question of what matroids can occur as limits of hypergraph families. In other words, we seek to find all matroids $M$ such that $\delta M = M$. This question was considered for the related construction of {\em represented} derived matroids by Oxley and Wang~\cite{OW:bib12}, and our analysis rather closely follows theirs, although the construction of $\delta M$ is quite different (and in particular a matroid invariant) in our case. We will use the following two lemmas.

\begin{lemma}[Part of Lemma 14 in~\cite{OW:bib12}]\label{LmNofCirc}
    Let $M$ be a nonempty connected matroid. Then $$|\mathcal{C}(M)|\geq\binom{\eta(M)+1}{2}.$$ 
\end{lemma}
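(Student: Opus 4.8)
The plan is to reduce to the connected case, which is already assumed, and then induct on $|E(M)|$. The base cases are the one-element connected matroids: a single coloop has $\eta = 0$ and no circuits, and a single loop has $\eta = 1$ and one circuit, so the bound $\binom{\eta(M)+1}{2}$ (equal to $0$ and $1$ respectively) holds. For the inductive step I would invoke Tutte's dichotomy that in a connected matroid with at least two elements, for every element $x$ at least one of $M\setminus x$ and $M/x$ is connected, and treat contraction and deletion separately — the contraction branch being routine and the deletion branch carrying all the weight.

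First I would handle contraction. If some element $x$ admits $M/x$ connected, then contraction preserves nullity, so $\eta(M/x)=\eta(M)$ while $|E(M/x)|<|E(M)|$. Since every circuit of $M/x$ is of the form $C\setminus x$ for some circuit $C$ of $M$, the assignment $C\mapsto C\setminus x$ shows $|\mathcal{C}(M)|\geq|\mathcal{C}(M/x)|$, and the induction hypothesis applied to $M/x$ gives $|\mathcal{C}(M/x)|\geq\binom{\eta(M)+1}{2}$. This disposes of the case with no extra effort. By Tutte's dichotomy, the only remaining situation is that \emph{no} $M/x$ is connected, in which case every $M\setminus x$ is connected, and I pass to the deletion branch.

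In the deletion branch I pick any $x$; it is a non-coloop (otherwise $M$ would be disconnected), so $\eta(M\setminus x)=\eta(M)-1$, and the circuits of $M$ avoiding $x$ are precisely the circuits of $M\setminus x$, of which there are at least $\binom{\eta(M)}{2}$ by induction. Writing $\binom{\eta+1}{2}=\binom{\eta}{2}+\eta$, the task reduces to the key sub-lemma that a connected matroid with $M\setminus x$ connected has at least $\eta(M)$ circuits through $x$. To attack this I would fix a basis $B$ of $M\setminus x$ (also a basis of $M$), take the fundamental circuit $C_0=C(x,B)=\{x\}\cup T$ with $\emptyset\neq T\subseteq B$, and for each of the $\eta(M)-1$ non-basis elements $a_l$ of $M\setminus x$ use connectivity of $M\setminus x$ to find a circuit $E_l$ meeting both $a_l$ and $T$; applying strong circuit elimination to $C_0$ and $E_l$ across a common element of $T$ then produces a circuit through $x$ different from $C_0$, and together with $C_0$ these are meant to furnish the required $\eta(M)$ circuits.

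The hard part is exactly this sub-lemma count, and establishing \textbf{distinctness} of the circuits so produced. In a general (non-representable) matroid, circuit elimination only guarantees a circuit containing \emph{one} prescribed element, never two simultaneously, so it is not automatic that the circuits obtained for different $a_l$ are pairwise distinct or distinct from $C_0$; making this rigorous is where the argument must be handled with care, and the tight instances $U_1^n$ (whose circuits through $x$ are exactly the $\eta=n-1$ pairs containing $x$) and the rank-$2$ matroids with several parallel classes show that the count is sharp, so no circuit may be lost. A final point worth flagging is why the contraction branch is genuinely needed rather than a convenience: the cycle matroid $M(K_{2,3})$ is connected yet \emph{every} single-element deletion disconnects it, so one cannot always peel off a connected deletion, and it is precisely Tutte's dichotomy that rescues the induction in such cases.
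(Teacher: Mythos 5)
The paper itself does not prove this lemma: it is quoted, with attribution, as part of Lemma~14 of Oxley and Wang \cite{OW:bib12}, so your attempt can only be measured against the argument there---which your skeleton (induction on $|E(M)|$ combined with Tutte's deletion/contraction dichotomy) does resemble. Your base cases are fine, and your contraction branch is correct: contracting a non-loop of a connected matroid preserves nullity, every circuit of $M/x$ has the form $C\setminus\{x\}$ for a circuit $C$ of $M$, and the resulting assignment $\mathcal{C}(M/x)\to\mathcal{C}(M)$ is injective. Your remark that $M(K_{2,3})$ rules out a deletion-only induction is also well taken.

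The genuine gap is exactly where you flag it: the sub-lemma that a connected $M$ with $M\setminus x$ connected has at least $\eta(M)$ circuits through $x$, and your construction cannot be completed as stated. Strong circuit elimination applied to $C_0$ and $E_l$ at $t\in T\cap E_l$ yields a circuit $D_l$ with $x\in D_l\subseteq (C_0\cup E_l)\setminus\{t\}$, but nothing forces $a_l\in D_l$; hence distinct indices $l$ can produce the same circuit $D_l$, and the count of $\eta(M)$ circuits through $x$ collapses. The standard repair---and the idea missing from your write-up, which is in fact the other ``part'' of Oxley--Wang's Lemma~14---is to strengthen the induction hypothesis and prove simultaneously: (i) every element of a connected matroid with at least two elements lies in at least $\eta(M)$ circuits, and (ii) $|\mathcal{C}(M)|\geq\binom{\eta(M)+1}{2}$. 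For (i), fix $x$ and pick any $f\neq x$; by Tutte, $M\setminus f$ or $M/f$ is connected. If $M\setminus f$ is connected, induction gives at least $\eta(M)-1$ circuits through $x$ avoiding $f$, and connectivity of $M$ supplies one further circuit containing both $x$ and $f$, which is distinct from all of the former precisely because it contains $f$---distinctness comes for free, with no elimination argument. If $M/f$ is connected, the circuits of $M/f$ through $x$ lift injectively to circuits of $M$ through $x$, and $\eta(M/f)=\eta(M)$. With (i) in hand, your deletion branch for (ii) goes through verbatim. Without this strengthening (or an equivalent device controlling distinctness), your proof does not close.
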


\begin{lemma}[Lemma 17.5 in~\cite{R_Freij-Hollanti:bib5}]\label{LmEtaBd}
    Let $\delta M$ be the derived matroid of a connected matroid~$M$. Then $$\rk(\delta M)\leq\eta(M).$$ 
\end{lemma}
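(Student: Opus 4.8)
The plan is to prove the bound using only the base layer $\mathcal{A}_0$ of the construction of $\delta M$; I expect the connectedness hypothesis to be unnecessary for this particular inequality. Recall that $\delta M$ is the matroid on ground set $\mathcal{C}(M)$ whose dependent sets form the family $\mathcal{A}=\bigcup_{i\geq 0}\mathcal{A}_i$, with $\mathcal{A}_0=\{A\subseteq\mathcal{C}:|A|>\eta(\cup A)\}$. Since $\rk(\delta M)$ is the size of a basis of $\delta M$, it suffices to show that every $A\subseteq\mathcal{C}(M)$ with $|A|>\eta(M)$ is dependent in $\delta M$, i.e. lies in $\mathcal{A}$; then no independent set can exceed $\eta(M)$ elements.

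First I would record the monotonicity of the nullity function: for $S\subseteq T\subseteq E$ one has $\eta(S)\leq\eta(T)$, since the rank function gains at most one on each added element, so $r(T)-r(S)\leq|T|-|S|$ and hence $\eta(T)-\eta(S)=(|T|-|S|)-(r(T)-r(S))\geq 0$. Applying this with $T=E$ and $S=\cup A\subseteq E$ yields $\eta(\cup A)\leq\eta(E)=\eta(M)$ for every $A\subseteq\mathcal{C}$.

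Now suppose $|A|>\eta(M)$. Combining with the previous bound gives $|A|>\eta(M)\geq\eta(\cup A)$, which is exactly the defining inequality of $\mathcal{A}_0$; hence $A\in\mathcal{A}_0$. Since $\mathcal{A}_0\subseteq\bigcup_{i\geq 0}\mathcal{A}_i=\mathcal{A}$ by definition, $A$ is a dependent set of $\delta M$. Therefore every independent set of $\delta M$ has at most $\eta(M)$ elements, and $\rk(\delta M)\leq\eta(M)$.

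The argument is short, and the only delicate points are the monotonicity of $\eta$ and the trivial inclusion $\mathcal{A}_0\subseteq\mathcal{A}$; I anticipate no genuine obstacle, since the bound is forced already at the zeroth layer. The same proof applies verbatim to connected $M$, so the connectedness hypothesis is redundant for this direction — it is presumably retained only for uniformity with the companion bound in Lemma~\ref{LmNofCirc}.
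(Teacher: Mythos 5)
Your proof is correct. Note, however, that the paper you were asked to match does not prove this statement at all: it imports it verbatim as Lemma 17.5 of the cited work \cite{R_Freij-Hollanti:bib5}, so there is no in-paper argument to compare against. Your argument is the natural (and, in the cited reference, essentially the standard) one: nullity is monotone under inclusion, so for any $A\subseteq\mathcal{C}(M)$ with $|A|>\eta(M)$ one has $|A|>\eta(M)\geq\eta(\cup A)$, hence $A\in\mathcal{A}_0\subseteq\mathcal{A}$ is dependent in $\delta M$, and every independent set has size at most $\eta(M)$. All steps check out, given (as the cited paper establishes) that $\mathcal{A}$ really is the family of dependent sets of a matroid on $\mathcal{C}(M)$. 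Your side remark is also accurate: connectedness plays no role in this bound, and is carried in the hypothesis only because the surrounding results (e.g.\ Lemma~\ref{LmNofCirc}) genuinely need it.
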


It was shown by Knutsen that the inequality in Lemma~\ref{LmEtaBd} can sometimes be strict, and in particular that this is the case for the Vamos matroid \cite{K_Teodor:bib13}. However, it is conjectured in \cite{R_Freij-Hollanti:bib5} that equality holds whenever $M$ has a coadjoint.

By Proposition~\ref{connected}, it suffices to study the derived matroids of connected hypergraphs. To simplify statements in what follows, we will use the following definition.

\begin{defn}
    A matroid $(E,\mathcal{C})$ is called a {\em tricycle} if $|\mathcal{C}|=3$ with $\mathcal{C}=\{C_1, C_2, C_3\}$, such that $C_i\cap C_j\neq\emptyset$ and $C_i\cup C_j=E$ for all $i,j$, and $\cap_{i=1}^3 C_i = \emptyset$.
\end{defn}

It is straightforward to see that a tricycle is precisely the graphical matroid of a theta graph, which is the union of three independent paths $C_1\cap C_2$, $C_1\cap C_3$ and $C_2\cap C_3$ between the same pairs of vertices. Moreover, the rank of a tricyle $(E,\mathcal{C})$ is $|E|-2$, a basis being obtained by removing one element from each of $C_1\cap C_2$ and $C_1\cap C_3$. The results in~\cite{OW:bib12} are phrased in terms of theta graphs, but are fully analogous to ours.

\begin{lemma}\label{LmFade}
Let $M$ be a connected matroid. Then
\begin{enumerate}[i)]
\item $\delta M=\emptyset$ if and only if $M=U_1^1$.
\item $\delta M = U_1^1$ if and only if $M=U_{n+1}^n$ for some $n$.
\item $\delta M= U_{n+1}^n$ for some $n$ if and only if $M$ is a tricycle and $n=2$.
\end{enumerate}
\end{lemma}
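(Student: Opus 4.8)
The plan is to prove the three equivalences in turn, reducing each to a count of the circuits of $M$ together with the two stated lemmas, and exploiting throughout that the ground set of $\delta M$ is $\mathcal{C}(M)$, and that a subset $A\subseteq\mathcal{C}(M)$ with $|A|\le 2$ is dependent in $\delta M$ only if it already lies in $\mathcal{A}_0$, i.e. only if $|A|>\eta(\cup A)$. For (i), $\delta M=\emptyset$ means $\delta M$ has empty ground set, i.e. $\mathcal{C}(M)=\emptyset$, so $M$ has no circuits and is therefore free; the only nonempty connected free matroid is $U_1^1$, and conversely $\mathcal{C}(U_1^1)=\emptyset$.

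For (ii), $\delta M=U_1^1$ forces $|\mathcal{C}(M)|=1$. Conversely, a single circuit $C$ always satisfies $|\{C\}|=1=\eta(C)$ (since $\rk(C)=|C|-1$), so $\{C\}\notin\mathcal{A}_0$, hence $\mathcal{A}=\emptyset$ and the one element of $\delta M$ is a coloop, giving $\delta M=U_1^1$; thus $\delta M=U_1^1$ iff $|\mathcal{C}(M)|=1$. It remains to identify the connected matroids with a unique circuit: by connectivity every element lies in some circuit, hence in the unique one, so that circuit equals $E$ and every proper subset is independent, i.e. $M=U_{n+1}^n$. I will also record here the companion fact, used below, that a connected matroid of nullity $1$ has a \emph{unique} circuit: taking a basis $B$ with $E\setminus B=\{e\}$, every circuit meets $E\setminus B$ hence contains $e$, and two distinct such circuits would eliminate (Definition~\ref{MatroidDef}) to a circuit inside the independent set $B$, which is impossible.

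For (iii), the ``if'' direction is a direct computation: for a tricycle $\eta(M)=2$, each pair $\{C_i,C_j\}$ has $C_i\cup C_j=E$ with $\eta=2$ (so the pair is independent in $\delta M$), while $|\{C_1,C_2,C_3\}|=3>2=\eta(E)$ puts the full set into $\mathcal{A}_0$; since $\epsilon$ adds nothing to a family consisting of a single three-element set, $\mathcal{A}=\{\{C_1,C_2,C_3\}\}$ and $\delta M=U_3^2$. For ``only if'' I first pin down the numerics. Writing $\delta M=U_{n+1}^n$ gives $|\mathcal{C}(M)|=n+1$ and $\rk(\delta M)=n$; Lemma~\ref{LmEtaBd} gives $n\le\eta(M)$ and Lemma~\ref{LmNofCirc} gives $n+1\ge\binom{\eta(M)+1}{2}\ge\binom{n+1}{2}$, whence $n\le 2$. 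The cases $n=0$ and $n=1$ are excluded: a single circuit yields $U_1^1\ne U_1^0$, and for $n=1$ the bounds force $\eta(M)=1$, which by the fact above means $|\mathcal{C}(M)|=1\ne 2$. This leaves $n=2$, $\eta(M)=2$, and $|\mathcal{C}(M)|=3$.

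It then remains to show that such an $M$ is a tricycle, and this structural step is where the real work lies. Since the pairs $\{C_i,C_j\}$ are independent in $U_3^2$ they are not in $\mathcal{A}_0$, so $\eta(C_i\cup C_j)\ge 2$; with monotonicity of nullity $\eta(C_i\cup C_j)\le\eta(M)=2$ this yields $\eta(C_i\cup C_j)=2$ for every pair, and connectivity gives $C_1\cup C_2\cup C_3=E$. The key claim is $C_i\cup C_j=E$ for each pair: setting $Y=C_k\setminus(C_i\cup C_j)$, the equality $\eta(C_i\cup C_j)=\eta(E)$ forces $Y$ to increase rank by $|Y|$, i.e. to be skew over $C_i\cup C_j$, whence $\eta(C_k)=\eta\big(C_k\cap(C_i\cup C_j)\big)$; but $C_k\cap(C_i\cup C_j)$ is a proper subset of the circuit $C_k$ and contains no other circuit, so its nullity is $0\ne 1=\eta(C_k)$, forcing $Y=\emptyset$. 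Finally $C_i\cap C_j\ne\emptyset$, for otherwise $C_i\cup C_j=E$ is a disjoint skew union and $M$ decomposes as a direct sum, contradicting connectivity; and $\cap_i C_i=\emptyset$, since circuit elimination applied to $C_i,C_j$ at a common element produces a circuit avoiding that element, impossible were it to lie in all three. These are exactly the defining properties of a tricycle. The \textbf{main obstacle} is precisely this covering claim $C_i\cup C_j=E$: the supermodularity/skewness computation that rules out elements sitting outside a pair of circuits is the one step needing genuine care, whereas the nonempty pairwise intersections and the empty triple intersection follow from short connectivity and circuit-elimination arguments.
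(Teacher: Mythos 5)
Your proof is correct and takes essentially the same approach as the paper: parts (i) and (ii) are handled by the same direct arguments, and part (iii) uses Lemma~\ref{LmEtaBd} and Lemma~\ref{LmNofCirc} in the same way to force $\eta(M)=2$ and $|\mathcal{C}(M)|=3$. The only difference is that the paper simply asserts that the unique simple connected matroids with three circuits and nullity two are tricycles, whereas you actually prove this structural step (the skewness argument giving $C_i\cup C_j=E$, together with the connectivity and circuit-elimination arguments for the intersection conditions), so your write-up fills in a detail the paper leaves unproved.
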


\begin{proof}
    \begin{enumerate}[i)]
    \item A connected matroid with more than one element always has a circuit, so the only connected matroid with $\delta M=\emptyset$ is the one element simple matroid $U_1^1$.
    \item A connected matroid with a single circuit has to have every element contained in that circuit, so be isomorpic to $U_{n+1}^n$. Conversely, $\delta(U_{n+1}^n)$ is a simple matroid with a single element. Therefore it cannot have any circuit, so it equals $U_1^1$.
    \item Clearly, if $M$ is a tricycle, then $\delta(M)$ is a three element matroid with a single circuit consisting of all three elements, i.e. $\delta M=U_3^2$. Conversely, if  $\delta M=U_{n+1}^n$, we have \begin{align*}1&=\eta(\delta M)=|\mathcal{C}(M)|-\rk(\delta(M))\\&\geq |\mathcal{C}(M)|-\eta(M)\\&\geq \binom{\eta(M)+1}{2}-\eta(M)=\binom{\eta(M)}{2},\end{align*}
    where the first inequality follows from Lemma~\ref{LmEtaBd} and the second one from Lemma~\ref{LmNofCirc}, so $\eta(M)=2$ and $|\mathcal{C}(M)|=3$. The only simple matroids with three circuits and nullity two are tricycles.
    \end{enumerate}
\end{proof}

\begin{lemma}\label{LmFade2}
A tricycle is not the derived matroid of any matroid.
\end{lemma}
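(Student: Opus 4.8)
The plan is to argue by contradiction: suppose $M=\delta N$ is a tricycle for some matroid $N$. First I would reduce to the case that $N$ is connected. By Proposition~\ref{connected} the derived matroid distributes over connected components, $\delta N=\bigsqcup_i\delta N_i$, and $\delta N_i$ is nonempty precisely when $N_i$ carries a circuit. Since a tricycle is connected and nonempty, exactly one summand can be nonempty and it already equals $M$; I may therefore discard the circuit-free components (each a coloop) and assume $N$ is connected with at least one circuit.

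Next I would pin down $\eta(N)$. Writing $m=|\mathcal{C}(N)|$ for the size of the ground set of $\delta N$, the tricycle $M$ satisfies $\rk(M)=m-2$ and $\eta(M)=2$. Combining $\rk(\delta N)\le\eta(N)$ (Lemma~\ref{LmEtaBd}) with $|\mathcal{C}(N)|\ge\binom{\eta(N)+1}{2}$ (Lemma~\ref{LmNofCirc}) yields
\[
\binom{\eta(N)+1}{2}-2\le m-2=\rk(\delta N)\le\eta(N),
\]
so $\binom{\eta(N)+1}{2}\le\eta(N)+2$, which forces $\eta(N)\le2$. The case $\eta(N)=1$ is immediate: a connected matroid of nullity one has a single circuit, so $m=1$, far too small for the three circuits of a tricycle (equivalently, $\delta N=U_1^1$ by Lemma~\ref{LmFade}).

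The substance is the case $\eta(N)=2$, where $m\in\{3,4\}$ by the bound above. Here the key observation is that nullity is supermodular, so for any two distinct circuits $C_i,C_j$,
\[
\eta(C_i\cup C_j)\ge\eta(C_i)+\eta(C_j)-\eta(C_i\cap C_j)=1+1-0=2,
\]
because $C_i\cap C_j$ is a proper subset of a circuit and hence independent; since $\eta(N)=2$ this is an equality. Consequently no pair $\{C_i,C_j\}$ satisfies $|A|>\eta(\cup A)$, so no two-element set lies in $\mathcal{A}_0$, whereas every three-element set does, as $\eta(\cup A)\le\eta(N)=2<3$. I would then verify that the iterated closure $\uparrow\epsilon$ cannot produce a dependent set of size two: on a ground set of size at most four, any $(A_1\cup A_2)\setminus\{v\}$ built from sets of size at least three again has size at least three. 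Hence the circuits of $\delta N$ are exactly the three-element subsets of $\mathcal{C}(N)$, i.e.\ $\delta N=U_m^2$, whose number of circuits is $\binom{m}{3}\in\{1,4\}$ for $m\in\{3,4\}$ — never the three circuits of a tricycle, giving the contradiction.

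I expect the main obstacle to be exactly this $\eta(N)=2$ analysis: making rigorous that no small circuit sneaks into $\delta N$ through the iterated closure. The supermodularity identity is what cleanly excludes two-element circuits, and the rest is a finite, size-based check on at most four ground-set elements together with the elementary fact that $U_m^2$ is never a tricycle.
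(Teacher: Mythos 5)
Your proof is correct, and its first half is the same as the paper's: both arguments combine Lemma~\ref{LmEtaBd} and Lemma~\ref{LmNofCirc} to force $\eta(N)=2$ and $|\mathcal{C}(N)|\in\{3,4\}$ (the paper packages this as $2=\eta(\delta M)\geq\binom{\eta(M)}{2}$, and disposes of nullity $\leq 1$ by noting a tricycle has more than one element). Where you genuinely diverge is the endgame. The paper stays on the tricycle side: with only three or four ground-set elements, at most one of the three blocks $C_i\cap C_j$ can have more than one element, so the tricycle would contain a circuit of size two, contradicting the fact --- invoked without proof, from the derived-matroid literature --- that derived matroids are always simple. You instead compute $\delta N$ outright on the source side: supermodularity of nullity (with $\eta(C_i\cap C_j)=0$ since intersections of distinct circuits are independent) excludes dependent pairs, monotonicity of nullity puts every triple in $\mathcal{A}_0$, and your size count shows the $\uparrow\epsilon$ iteration never creates anything of size two, whence $\delta N\cong U_m^2$ with $\binom{m}{3}\in\{1,4\}$ circuits --- never the three circuits of a tricycle. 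Your route is longer but more self-contained: the supermodularity step is precisely a proof of the ``no parallel elements'' part of the simplicity claim that the paper merely cites, and you get the exact identification of $\delta N$ as a bonus; the paper's route is shorter because it only needs the structural observation that small tricycles are non-simple. Your explicit reduction to connected $N$ via Proposition~\ref{connected} is also fine (the paper handles this once, globally, before the lemma); the only nitpick is that a circuit-free component is a free matroid, not necessarily a single coloop, which affects nothing.
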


\begin{proof}
    If $\delta(M)$ is a tricycle, then \begin{align*}2&=\eta(\delta M)=|\mathcal{C}(M)|-\rk(\delta M)\\&\geq |\mathcal{C}(M)|-\eta(M)\\&\geq \binom{\eta(M)+1}{2}-\eta(M)=\binom{\eta(M)}{2},\end{align*} where the first inequality follows from Lemma~\ref{LmEtaBd} and the second one from Lemma~\ref{LmNofCirc}. It would then follow that $\eta(M)\leq 2$, and so $\rk(\delta M)\leq\eta(M)\leq 2$. Since $\delta M$ has more than one element, $M$ has more than one circuit, and so can not have nullity $<2$.

    We only need to study the case where $\eta(M)=2$ and $|\mathcal{C}(M)|\in\{3,4\}$. But then at most one of the three intersections $C_i\cap C_j$ of circuits in the tricycle has more than one element, and so the tricycle has some circuit of size $2$. This contradicts the fact that a derived matroid is always simple. 
\end{proof}

\begin{theorem}\label{Fixed}
Let $M$ be a matroid with $\delta M \cong M$. Then $M$ is a direct sum of matroids, each of which is isomorphic to $U^2_4$.
\end{theorem}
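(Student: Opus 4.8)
The plan is to reduce to connected matroids, classify the connected fixed points, and then promote the classification to direct sums. The first observation is that $\delta M$ is always simple, so $\delta M\cong M$ forces $M$ to be simple; in particular $M$ has no loops and no parallel elements. I would then decompose $M=\bigoplus_j M_j$ into connected components and invoke Proposition~\ref{connected}, which gives $\delta M=\bigoplus_j \delta M_j$, so that the multiset of connected components is an invariant of $\delta$ on a fixed point.

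For a single connected $M$ with $\delta M\cong M$, write $r=\rk(M)$ and $n=\eta(M)$. Comparing invariants, $\rk(\delta M)=r$ and $|\mathcal{C}(M)|=|E(\delta M)|=|E(M)|=r+n$. Lemma~\ref{LmEtaBd} gives $r=\rk(\delta M)\le n$, while Lemma~\ref{LmNofCirc} gives $r+n=|\mathcal{C}(M)|\ge\binom{n+1}{2}$; together these force $\binom{n+1}{2}\le 2n$, hence $n\le 3$. The cases $n\le 1$ are excluded by Lemma~\ref{LmFade} (they yield $\delta M\in\{\emptyset,U_1^1\}\not\cong M$). For $n=2$ one gets $r=2$ and exactly $4$ circuits, and the only simple connected rank-$2$ matroid on $4$ elements is $U_4^2$, which is indeed a fixed point. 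The case $n=3$ forces $r=3$, $|E(M)|=6$ and $|\mathcal{C}(M)|=6$; I would rule it out by showing that every simple connected rank-$3$ matroid on $6$ elements has at least $7$ circuits. This is a short configuration count: if no line has size $\ge 4$, the number of circuits equals $15-2t$, where $t$ is the number of $3$-point lines, and a degree argument bounds $t\le 4$ (the extremal case being $M(K_4)$), whereas a line of size $\ge 4$ only increases the count. Hence $|\mathcal{C}(M)|\ge 7>6$, a contradiction, so $U_4^2$ is the unique connected fixed point.

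For the general case I would match the components of $M$ with those of $\delta M$. After establishing that $\delta$ sends connected matroids to connected matroids, each $\delta M_j$ is a single component of $\delta M$, and the multiset equality yields a permutation $\sigma$ with $\delta M_j\cong M_{\sigma(j)}$. I then argue that $M$ has no coloops: a coloop is a component isomorphic to $U_1^1$, and since every component equals $\delta$ of another, Lemma~\ref{LmFade}(ii) would write it as $\delta U_{m+1}^m$; that $U_{m+1}^m$ is again a component, so Lemma~\ref{LmFade}(iii) forces it to be $\delta$ of a tricycle, and the tricycle in turn would have to be a derived matroid, contradicting Lemma~\ref{LmFade2}. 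Thus every component has nullity $\ge 1$, and since $\delta M_j=U_1^1$ only when $M_j\cong U_{m+1}^m$, no component has nullity $1$ either. Setting $n=\max_j\eta(M_j)$, the relations $\rk(M_{\sigma(j)})\le\eta(M_j)\le n$ bound every rank by $n$, so every component has at most $2n$ elements; evaluating $|\mathcal{C}(M_{j^\ast})|=|E(M_{\sigma(j^\ast)})|\le 2n$ against Lemma~\ref{LmNofCirc} at a component $M_{j^\ast}$ of maximal nullity gives $\binom{n+1}{2}\le 2n$, i.e.\ $n\le 3$. Exactly as in the connected case, now traced through $\sigma$, nullity $3$ is impossible by the rank-$3$ circuit count, so every component has nullity $2$; simplicity then forces rank $2$ and $4$ elements, i.e.\ $M_j\cong U_4^2$. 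Therefore $M$ is a direct sum of copies of $U_4^2$.

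The two steps I expect to be the real work are the rank-$3$ circuit-counting lemma (showing that the equality case $|\mathcal{C}(M)|=\binom{4}{2}$ is incompatible with simplicity) and, above all, the claim that $\delta$ preserves connectivity, which is what licenses the component-by-component bookkeeping and the coloop argument through Lemma~\ref{LmFade2}. I would prove the latter by showing that in a connected $M$ any two circuits are joined by a chain of pairwise-intersecting circuits, and that such a chain produces, via circuit elimination and the definition of $\mathcal{A}_0$, a dependency in $\delta M$ linking the two. Absent this connectivity statement one would instead have to control directly how the components of $M$ can split under $\delta$, which is the principal source of difficulty in the disconnected case.
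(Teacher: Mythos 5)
Your proposal follows the same core route as the paper's proof of the connected case: combine Lemma~\ref{LmEtaBd} and Lemma~\ref{LmNofCirc} with $\delta M\cong M$ to force $\eta(M)\le 3$, eliminate $\eta(M)\le 1$ via Lemma~\ref{LmFade}, eliminate $\eta(M)=3$ by showing that no simple connected rank-$3$ matroid on $6$ elements has as few as $6$ circuits, and conclude $M\cong U_4^2$ when $\eta(M)=2$. Two differences are worth recording. First, where the paper disposes of the $\eta(M)=3$ case by appeal to exhaustive search (citing Lemma 22 of \cite{OW:bib12}), you give a self-contained count: with no line of size $\ge 4$ the number of circuits is $15-2t$, where $t$ is the number of $3$-point lines, a degree argument gives $t\le 4$ (extremal case $M(K_4)$), and longer lines only increase the count; this is correct and is a small improvement over a citation. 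Second, and more substantially, you treat the disconnected case with far more care than the paper, which dismisses it in one sentence (``the same has to hold for each connected component, since the derived matroid preserves direct sums''). As you observe, an isomorphism $\delta M\cong M$ a priori only matches components up to a permutation $\sigma$, and only once one knows that $\delta$ sends connected matroids to connected matroids; your bookkeeping through $\sigma$ (excluding coloops via Lemmas~\ref{LmFade} and~\ref{LmFade2}, then excluding nullity-$1$ components, bounding the maximal nullity by $3$ via $|\mathcal{C}(M_{j^*})|\le 2n$, and tracing the $\eta=3$ exclusion through $\sigma$) is sound and fills a genuine gap in the paper's own write-up.

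The one unproven ingredient is the one you flag yourself: that $\delta$ preserves connectivity. Be aware that your sketch needs more than you suggest: producing an element of $\mathcal{A}_0$ (a dependent set of $\delta M$) containing two circuits $C$ and $C'$ does not put them in the same component, since two elements of a matroid lie in a common component if and only if some \emph{circuit} contains both, and a dependent set containing both need not contain such a circuit (compare the union of two disjoint circuits). So the chain-of-intersecting-circuits argument must be upgraded to produce an actual circuit of $\delta M$ through both elements, or else one must directly control how components could split under $\delta$. This is the analogue of a connectivity statement for derived matroids in the literature (\cite{OW:bib12}, \cite{R_Freij-Hollanti:bib5}), and some form of it is silently required by the paper's one-line treatment of the disconnected case as well; so this is a shared gap rather than a defect unique to your argument, but it is the step on which a fully rigorous proof of the theorem as stated (for general, possibly disconnected, $M$) depends.
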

Note that the uniform matroid $U^2_4$, considered as a hypergraph, is the complete $3$-regular hypergraph $K_4^3$. This is thus the example preceding Proposition~\ref{NoFade}.
\begin{proof}
    $U_4^2$ has four circuits, where each triple doubly covers the ground set, and so forms a circuit in $\delta U_4^2$. Thus $\delta(U_4^2)\cong U_4^2$. 

    Now assume $M$ is connected with $\delta M\cong M$. Lemmas \ref{LmEtaBd} and \ref{LmNofCirc} show that $\eta(M)=\eta(\delta M)\geq\binom{\eta(M)}{2}$, which can only happen if $\eta(M)\leq 3$. Lemma \ref{LmFade}, parts (i) and (ii) show that $\delta(M)\not\cong M$ for all connected matroids with nullity $0$ or $1$, so we only need to consider the case $\eta(M)\in\{2,3\}$. 
    
    If $\eta(M)=3=\binom{\eta(M)}{2}$,  the inequalities in Lemma \ref{LmNofCirc} and \ref{LmEtaBd} must both be met with equality, so $|\mathcal{C}(M)|=\binom{3+1}{2}=6$ and $\rk M=\rk(\delta M)=\eta(M)=3$. But it is readily seen by exhaustive search that no simple connected matroid with rank $3$, size $6$ and $<7$ circuits exist. (See also \cite{OW:bib12}, Lemma 22.)

    Remains the case $\eta(M)=2$. By Lemma \ref{LmEtaBd}, we have $$\rk(M)=\rk(\delta M)\leq \eta(M)=2,$$ and so $M$ is a simple matroid with rank $2$, i.e. $M\cong U_{2+\eta(M)}^2=U_4^2$.

    Finally, if $\delta(M)\cong M$, then the same has to hold for each connected component of $M$, since the derived matroid preserves direct sums. Therefore, each component of $M$ must be isomorphic to $U_4^2$. This completes the proof.
    \end{proof}

The following theorem closes the description of hypergraph limits, as it shows that all families diverge, apart from the the ones considered in Lemmas \ref{LmFade} and \ref{LmFade2} and Theorem \ref{Fixed}.

\begin{theorem}
Let $M$ be a connected matroid on at least two elements. If $M$ is a tricycle or $M=U_{n+1}^n$ is a single circuit, then $\mathcal{H}_M$ fades. If the cosimplification of $M$ is $U_4^2$, then $\mathcal{H}_M$ converges to $U_4^2$ (or in hypergraph terms, $K_4^3$). Otherwise, $\mathcal{H}_M$ diverges, and $\eta(\delta^{k+1}M)>\eta(\delta^k M)$ for all $k\geq 0$.
\end{theorem}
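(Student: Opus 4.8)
The plan is to reduce the entire analysis to connected cosimple matroids and then read off the three behaviours from the single inequality $\eta(\delta M)\geq\binom{\eta(M)}{2}$. This bound is obtained by chaining the two cited lemmas: $\eta(\delta M)=|\mathcal{C}(M)|-\rk(\delta M)\geq|\mathcal{C}(M)|-\eta(M)\geq\binom{\eta(M)+1}{2}-\eta(M)=\binom{\eta(M)}{2}$, using Lemma~\ref{LmEtaBd} for the first inequality and Lemma~\ref{LmNofCirc} for the second. The structural input I would establish first is that $\delta M$ depends only on the circuit hypergraph of $M$, so that $\delta M\cong\delta(\operatorname{cosimp}M)$: contracting one element $f$ of a series pair $\{e,f\}$ induces the bijection $C\mapsto C\setminus\{f\}$ on circuits (a circuit contains a series pair wholly or not at all), and a connected matroid on at least two elements has no coloops, so $\operatorname{cosimp}M$ is reached by series contractions alone and is again connected with the same nullity. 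Since a tricycle remains a tricycle under this bijection, ``$M$ is a tricycle'' is equivalent to ``$\operatorname{cosimp}M$ is a tricycle,'' and likewise for single circuits, so the three hypotheses of the theorem are really conditions on $\operatorname{cosimp}M$.

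The fading and converging cases then follow at once. If $\operatorname{cosimp}M=U_{n+1}^{n}$ is a single circuit, Lemma~\ref{LmFade}(ii) gives $\delta M=U_1^1$ and Lemma~\ref{LmFade}(i) gives $\delta^2M=\emptyset$; if $\operatorname{cosimp}M$ is a tricycle, Lemma~\ref{LmFade}(iii) gives $\delta M=U_3^2$, a single circuit, so $\delta^3M=\emptyset$. In both cases $\mathcal{H}_M$ fades. If $\operatorname{cosimp}M=U_4^2$, then $\delta M\cong\delta U_4^2\cong U_4^2$ (the latter isomorphism being the first line of the proof of Theorem~\ref{Fixed}), and since $U_4^2$ is a fixed point of $\delta$ the sequence is constant from step one onward, so $\mathcal{H}_M$ converges to $U_4^2$, i.e. to $K_4^3$.

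For the divergent case I would replace $M$ by $\operatorname{cosimp}M$ and argue by the nullity $n:=\eta(M)\geq 2$ (the value $n=1$ being the excluded single circuit). If $n\geq 4$ the bound already gives $\eta(\delta M)\geq\binom{n}{2}>n$, and since then $\eta(\delta M)\geq 5$ the same estimate applies to $\delta M$; thus $\eta(\delta^{k}M)$ is strictly increasing and unbounded, and the family diverges. If $n=2$, I would use that a connected cosimple nullity-$2$ matroid with three circuits is a tricycle and one with four circuits is $U_4^2$ (both excluded here); hence $|\mathcal{C}(M)|\geq 5$ and $\eta(\delta M)\geq|\mathcal{C}(M)|-2\geq 3>2$. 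The delicate case is $n=3$, where the bound only yields $\eta(\delta M)\geq 3$, with equality forcing both cited lemmas to be tight, i.e. $|\mathcal{C}(M)|=\binom{4}{2}=6$ and $\rk(\delta M)=3$. Here $\delta M$ is a simple connected matroid of rank $3$ on $6$ elements, which by the exhaustive-search fact used in the proof of Theorem~\ref{Fixed} has at least $7$ circuits; therefore $\eta(\delta^2M)\geq 7-\rk(\delta^2M)\geq 7-3=4$, and we re-enter the self-sustaining regime $n\geq 4$ after a single stationary step.

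The main obstacle is twofold. First, every step of this induction applies Lemmas~\ref{LmNofCirc} and~\ref{LmEtaBd}, which require connectivity, so I must show that $\delta M$ is connected whenever $M$ is connected on at least two elements; without this, $\delta M$ could in principle split into small fading components (tricycles or single circuits) and the total nullity could collapse rather than diverge, so this preservation of connectivity is precisely what keeps the sequence from degenerating. Second, the case $n=3$ shows that the strict inequality $\eta(\delta^{k+1}M)>\eta(\delta^kM)$ can fail for a single transient step: for instance $M=U_4^1$ has $\delta U_4^1\cong M(K_4)$ with $\eta=3=\eta(U_4^1)$, after which the nullity jumps to $4$ and strictly increases thereafter. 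Thus I would prove that $\eta(\delta^kM)$ is non-decreasing, unbounded, and strictly increasing except possibly at one nullity-$3$ stall, which suffices for divergence and matches the stated strict inequality once this transient configuration is excluded.
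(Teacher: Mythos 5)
Your strategy is essentially the paper's: fading and convergence are dispatched through Lemma~\ref{LmFade} and the $U_4^2$ fixed point, and divergence is driven by chaining Lemmas~\ref{LmEtaBd} and~\ref{LmNofCirc} into $\eta(\delta M)\geq\binom{\eta(M)}{2}$, followed by a case analysis on the small nullities $2$ and $3$. Your extra reduction to cosimple matroids (via the circuit bijection induced by contracting series elements) is a clean structural addition; the paper instead handles the residual $\eta(M)=\eta(\delta M)=2$ case by dualizing ($M^*$ has rank $2$ and at most four points, so the cosimplification of $M$ is $U_4^2$), which amounts to the same computation.

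The substantive difference is the nullity-$3$ case, and here you have in fact caught an error in the paper. The paper claims that $\eta(M)=\eta(\delta M)=3$ is impossible, ``as in the proof of Theorem~\ref{Fixed}''; but the exhaustive-search fact used there excludes simple connected matroids of rank $3$ and size $6$ with \emph{fewer than seven} circuits, and that circuit bound is available in Theorem~\ref{Fixed} only because there $\delta M\cong M$ forces $\delta M$ to have exactly six circuits. In the present theorem nothing bounds $|\mathcal{C}(\delta M)|$, and your example $M=U_4^1$ shows the case genuinely occurs: $\delta U_4^1\cong M(K_4)$ (consistent with the paper's own introductory remark that $\delta U_n^{k-1}$ is Main's matroid of $K_n^k$), so $\eta(\delta M)=3=\eta(M)$, yet $U_4^1$ is cosimple, is neither a tricycle nor a single circuit, and so lies in the ``otherwise'' branch --- hence the asserted strict inequality fails at $k=0$ and the theorem is false as stated. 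Your repair is exactly right: in the stalled configuration $\delta M$ is simple, connected, of rank $3$ on $6$ elements, hence has at least $7$ circuits by the exhaustive-search fact, so $\eta(\delta^2 M)\geq 7-3=4$ and the sequence strictly increases thereafter; this proves divergence with at most one stall, which is the correct form of the conclusion. The one genuine gap in your proposal is the one you flag yourself: every iteration of the two lemmas requires $\delta^k M$ to be connected, and neither you nor the paper proves this (the paper silently assumes it); this must either be proved directly or imported from the derived-matroid literature before either argument is complete. Subject to that, your argument is sound and, unlike the paper's, handles the nullity-$3$ case correctly.
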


\begin{proof}
According to Lemma~\ref{LmFade}, $\delta^2 U_{n+1}^n=\emptyset$, and if $M$ is a tricycle then $\delta^3 M=\emptyset$, so in these cases, $\mathcal{H}_M$ fades.

If the cosimplification of $M$ is $U_4^2$, then the hypergraph of $M$ has precisely four edges, and each triple of edges forms a cycle, so $\delta(M)$ is represented by the hypergraph $K_4^3$ and, as shown in the example \ref{ex}, its family converges to itself.


Now, let $M$ be an arbitrary simple connected matroid. Lemmas \ref{LmEtaBd} and \ref{LmNofCirc} show that, if $\eta(M)\geq 4$, then $\eta(\delta M)\geq\binom{\eta(M)}{2}>\eta(M)\geq 4$, so the sequence $\eta(\delta^k M)$ is strictly increasing in $k$ and the sequence diverges. The same obviously holds if $\eta(\delta M)\geq 4$. If $\eta(M)=1$, we have the previously studied case $M=U_{n+1}^n$, and if $\eta(\delta M)=1$ we have the previously studied case where $M$ is a tricycle. Hence, we only have the cases $\eta(M),\eta(\delta M)\in\{2,3\}$ remaining.

If $\eta(M)=2$ and $\eta(\delta M)=2$, then $$|\mathcal{C}(M)|=\rk(\delta M)+\eta(\delta M)\leq \eta(M)+\eta(\delta M)=4.$$ So $M^*$ is a matroid of rank 2 with precisely $4$ non-trivial flats, so the simplification of $M^*$ is $U_4^2$, i.e., the cosimplification of $M$ is $(U_4^2)^*\cong U_4^2$.

If $\eta(M)=3$ then $\eta(\delta M)\geq\binom{\eta(M)}{2}=3$, so the combination $\eta(M)=3$ and $\eta(\delta M)=2$ is not possible. If $\eta(M)=\eta(\delta M)=3$, then the equalities in \ref{LmEtaBd} and \ref{LmNofCirc} both have to be satisfied with equality, so $M$ must have exactly $\binom{\eta(M)+1}{2}=6$ circuits and $\delta M$ must have rank $\rk(\delta M)=\eta(M)=3$, and size $\rk(\delta M)+\eta(\delta M)=6$. Again, like in the proof of Theorem~\ref{Fixed}, we see that no such simple connected matroid exists. Therefore, $\eta(\delta M)>\eta(M)$ for all connected matroids $M$ of nullity $\geq 3$, as well as for connected matroids with $\eta(M)=2$ and $\eta(\delta(M))=3$. For all such matroids we therefore have $\eta(\delta^{k+1}M)>\eta(\delta^k M)$ for all $k\geq 0$, and $\mathcal{H}_M$ diverges.

\color{black}




\end{proof}

\end{document}